\documentclass{amsart}

\usepackage{amssymb,amsmath}
\usepackage{mathrsfs}
\usepackage[neveradjust]{paralist}
\usepackage[all]{xypic}
\xyoption{dvips}
\usepackage{epic,eepic}
\usepackage{url}

\theoremstyle{definition}
\newtheorem{ntn}{Notation}[section]
\newtheorem{dfn}[ntn]{Definition}
\theoremstyle{plain}
\newtheorem{lem}[ntn]{Lemma}
\newtheorem{prp}[ntn]{Proposition}
\newtheorem{thm}[ntn]{Theorem}
\newtheorem{cor}[ntn]{Corollary}

\newtheorem{prb}[ntn]{Problem}
\theoremstyle{remark}

\newtheorem{rmk}[ntn]{Remark}
\newtheorem{exa}[ntn]{Example}

\newcommand{\boldzero}{{\mathbf{0}}}
\newcommand{\boldone}{{\mathbf{1}}}
\newcommand{\bolda}{{\mathbf{a}}}
\newcommand{\boldb}{{\mathbf{b}}}
\newcommand{\boldc}{{\mathbf{c}}}
\newcommand{\bsb}{{\boldsymbol b}}
\newcommand{\bolde}{{\mathbf{e}}}
\newcommand{\boldu}{{\mathbf{u}}}

\newcommand{\del}{\partial}
\newcommand{\eps}{\varepsilon}
\newcommand{\from}{\leftarrow}
\newcommand{\ideal}[1]{{\langle#1\rangle}}
\newcommand{\into}{\hookrightarrow}
\newcommand{\onto}{\twoheadrightarrow}
\newcommand{\xymat}{\SelectTips{cm}{}\xymatrix}
\newcommand{\sss}{s}
\newcommand{\calD}{\mathscr{D}}
\newcommand{\calE}{\mathscr{E}}
\newcommand{\calM}{\mathscr{M}}
\newcommand{\calO}{\mathscr{O}}
\newcommand{\frakm}{{\mathfrak{m}}}
\newcommand{\frakM}{{\mathfrak{M}}}
\newcommand{\frakS}{{\mathfrak{S}}}

\newcommand{\CC}{\mathbb{C}}
\newcommand{\NN}{\mathbb{N}}
\newcommand{\QQ}{\mathbb{Q}}
\newcommand{\ZZ}{\mathbb{Z}}

\DeclareMathOperator{\diag}{diag}
\DeclareMathOperator{\dlim}{\varinjlim}

\DeclareMathOperator{\gr}{gr}

\DeclareMathOperator{\id}{id}

\DeclareMathOperator{\qdeg}{qdeg}
\DeclareMathOperator{\Res}{Res}
\DeclareMathOperator{\SRes}{sRes}
\DeclareMathOperator{\Spec}{Spec}
\DeclareMathOperator{\Var}{Var}

\begin{document}

\title[Hypergeometric D-modules and twisted Gau\ss--Manin systems]
{Hypergeometric D-modules and\\twisted Gau\ss--Manin systems}

\author{Mathias Schulze}
\address{
M. Schulze\\
Dept.\ of Mathematics\\
Oklahoma State University\\
401 MSCS\\
Stillwater, OK 74078\\
USA}
\email{mschulze@math.okstate.edu}
\thanks{MS was supported by the College of Arts \& Sciences at Oklahoma State University through a FY08 Dean's Incentive Grant.}

\author{Uli Walther}
\address{
U. Walther\\
Purdue University\\
Dept.\ of Mathematics\\
150 N.\ University St.\\
West Lafayette, IN 47907\\
USA}
\email{walther@math.purdue.edu}
\thanks{UW was supported by the NSF under grant DMS~0555319, and by the NSA under grant H98230-06-1-0012.}

\begin{abstract}
The Euler--Koszul complex is the fundamental tool in the homological study of $A$-hypergeometric differential systems and functions. 
We compare Euler--Koszul homology with D-module direct images from the torus to the base space through orbits in the corresponding toric variety. 
Our approach generalizes a result by Gel'fand et al.\ \cite[Thm.~4.6]{GKZ90} and yields a simpler, more algebraic proof.

In the process we extend the Euler--Koszul functor to a category of infinite toric modules and describe multigraded localizations of Euler--Koszul homology.
\end{abstract}

\subjclass{13N10,14M25}

\keywords{toric ring, hypergeometric system, Euler--Koszul homology, D-module, direct image, Gau\ss-Manin system}

\maketitle
\setcounter{tocdepth}{1}
\tableofcontents
\numberwithin{equation}{section}

\section{Introduction}

\subsection{Definition of GKZ-systems}

Let $\ZZ^d$ and $\ZZ^n$ denote the free $\ZZ$-modules with bases $\eps=\eps_1,\ldots,\eps_d$ and $e=e_1,\ldots,e_n$ respectively. 
Let $A=(a_{i,j})$ be an integer $d\times n$-matrix with columns $\bolda_1,\ldots,\bolda_n$. 
We consider $A$ both as a map $\ZZ^n\to\ZZ^d$ with respect to the bases above and as the finite subset $\{\bolda_1,\ldots,\bolda_n\}$ of $\ZZ^d$ consisting of the images of the $e_i$. 
We assume that $\ZZ A=\ZZ^d$ and that $\NN A$ is a positive semigroup which means that $0$ is the
only unit in $\NN A$.
To this type of data, Gel'fand, Graev, Kapranov and Zelevinski{\u\i} \cite{GGZ87,GKZ89}
associated in the 1980's a class of $D$-modules today called \emph{GKZ-} or \emph{$A$-hypergeometric systems} and defined as follows. 

Let $x_A=x_1,\dots,x_n$ be the coordinate system on $X:=\Spec(\CC[\NN^n])\cong \CC^n$
corresponding to $e$, and let $\del_A=\del_1,\dots,\del_n$ be the corresponding partial derivative
operators on the sheaf $\calO_X$ of regular functions on $X$ or its ring of global sections $\CC[x_A]$. 
Then the \emph{Weyl algebra}
\[
D_A=\CC\ideal{x_A,\del_A\mid[x_i,\del_j]=\delta_{i,j},\,[x_i,x_j]=0=[\del_i,\del_j]}
\]
is the ring of algebraic differential operators on $X=\CC^n$ and $D_A=D_X$ is
the ring of global sections of the sheaf $\calD_X$ of algebraic
differential operators on $X$.
With $\boldu_+=(\max(0,u_j))_j$ and $\boldu_-=\boldu_+-\boldu$, write
$\square_\boldu$ for $\del^{\boldu_+}-\del^{\boldu_-}$ where here and
elsewhere we freely use multi-index notation. The
\emph{toric relations of $A$} are then
\[
\square_A:=\{\square_\boldu \,\mid\, A\boldu=0\},
\]
while the \emph{Euler vector fields}
$E=E_1,\dots,E_d$ to $A$ are
\begin{equation}\label{42}
E_i:=\sum_{j=1}^na_{i,j}x_i\del_j.
\end{equation}
Finally, for $\beta\in\CC^d$, the \emph{$A$-hypergeometric system} is the $D_A$-module 
\[
M_A(\beta)=D_A/D_A\cdot\ideal{E-\beta,\square_A}.
\]

The structure of the solutions to the (always holonomic) modules
$M_A(\beta)$ is tightly interwoven with the combinatorics of the pair
$(A,\beta)\in(\ZZ^d)^n\times\CC^d$, and $A$-hypergeometric structures
are nearly ubiquitous. Indeed, research of the past two decades
revealed that toric residues, generating functions for intersection
numbers on moduli spaces, and special functions (Gau\ss, Bessel,
Airy, etc.) may all be viewed as solutions to GKZ-systems. In other
directions, varying Hodge structures on families of Calabi--Yau toric
hypersurfaces as well as the space of roots of univariate polynomials
with undetermined coefficients have $A$-hypergeometric structure.

\subsection{Torus action} 

Consider the algebraic $d$-torus $T=\Spec(\CC[\ZZ^d])$ with coordinate functions $t=t_1,\dots,t_d$ corresponding to $\eps=\eps_1,\ldots,\eps_d$.
One can view the columns $\bolda_1,\dots,\bolda_n$ of $A$, as
characters $\bolda_i(t)=t^{\bolda_i}$ on $T$, and the parameter vector
$\beta\in\CC^d$ as a character on its Lie algebra via $\beta(t_i\del_{t_i})=-\beta_i+1$.
A natural tool for investigating $M_A(\beta)$ is the torus action of $T$ on the cotangent space
$X^*=T^*_0X$ of $X$ at $0$ given by
\[
t\cdot\del_A=(t^{\bolda_1}\del_1,\dots,t^{\bolda_n}\del_n).
\]
The coordinate ring of $X^*$ is $R_A:=\CC[\del_A]$ which contains the \emph{toric ideal} $I_A$ generated by the toric relations $\square_A$.
For $\boldone_A=(1,\ldots,1)\in X$, $I_A$ is the ideal of the closure of the orbit $T\cdot\boldone_A$ of $\boldone_A$ whose coordinate ring is the \emph{toric ring} 
\[
S_A:=R_A/I_A\cong\CC[t^{\bolda_1},\dots,t^{\bolda_n}]\cong\CC[\NN A].
\]

The contragredient action of $T$ on $R_A$ given by
\[
(t\cdot P)(\del_A)=P(t^{-\bolda_1}\del_1,\dots,t^{-\bolda_n}\del_n),
\]
for $P\in R_A$, defines a $\ZZ^d$-grading on $R_A$ and on the coordinate ring $\CC[x_A,\del_A]$ of $T^*X$ by
\begin{equation}\label{38}
-\deg(\del_j)=\bolda_j=\deg(x_j).
\end{equation}
Note that for $\ZZ^d$-homogeneous $P\in R_A$ the commutator
$[E_i,P]$ equals $\deg_i(P)P$ where $\deg_i(-)$ is the $i$-th
component of $\deg(-)$. 
As  $\del_ix_i-x_i\del_i=1$, \eqref{38} also defines a $\ZZ^d$-grading on the sheaves of differential operators $\calD_X$ and $\calD_{X^*}$ under which $E$ and $\square_A$ are homogeneous.

Note that $A$ and $\beta$ naturally define an algebraic $\calD_T$-module
\begin{equation}\label{18}
\calM(\beta):=\calD_T/\calD_T\ideal{\del_{t_i}t_i+\beta_i\mid i=1,\dots,d},
\end{equation}
$\calO_T$-isomorphic to $\calO_T$ but equipped with a twisted
$\calD_T$-module structure expressed symbolically as
\[
\calM(\beta)=\calO_T\cdot t^{-\beta-1}
\]
on which $\calD_T$ acts via the product rule.

\subsection{Questions, results, techniques}

In our algebraic setting, the ring of global sections of $\calD_{X^*}$
is identified with $D_A$ via the Fourier transform. Under this
correspondence a natural question is the following:

\begin{prb}
Study the relationship between (the Fourier transform of)
$M_A(\beta)$ and the direct image $\phi_+\calM_\beta$ of
$\calM_\beta$ under the orbit map 
\[
\phi\colon T\to O\to X^*.
\]
\end{prb}

An important result in this direction was given in \cite[Thm.~4.6]{GKZ90}:
for non-resonant $\beta$ the two modules are
isomorphic. Here, a parameter is \emph{non-resonant} if it is not
contained in the locally finite subspace arrangement of resonant parameters
\[
\Res(A):=\bigcup_{\tau}\left(\ZZ^d+\CC\tau\right),
\]
the union being taken over all linear subspaces $\tau\subseteq \QQ^n$
that form a boundary component of the rational polyhedral cone $\QQ_+ A$.

\medskip

A powerful way of studying $M_A(\beta)$ is to consider it as a $0$-th
homology of a Koszul type complex $K_\bullet(S_A,\beta)$ of $E-\beta$
on $D_A/D_A\cdot\square_A\cong\CC[x_A]\otimes_\CC S_A$.  The idea
of such \emph{Euler--Koszul complex} is already visible in
\cite{GKZ89} and was significantly enhanced in \cite{MMW05}. Results
from \cite{MMW05} show that $K_\bullet(S_A,\beta)$ is a resolution of
$M_A(\beta)$ if and only if $\beta$ is not in the
\emph{$A$-exceptional locus} $\calE_A$, a well-understood (finite)
subspace arrangement of $\CC^n$.

In \cite[\S4]{Ado94} a variation of this complex can be found, whose is reminiscent of that of the Gau\ss--Manin system of the map
\[
(t_1,\dots,t_d)\mapsto(t^{\bolda_1},\dots,t^{\bolda_n})=\del_A
\]
by factorization through its graph, see for example \cite{Pha79}.
This suggests that $K_\bullet(S_A,\beta)$ might be suitable for representing the direct image $\phi_+(\calM_\beta)$, an observation (inspired by a talk by Adolphson) that became the catalyst for this
article.

The main results in this article, contained in Section~\ref{3}, make the relationship between $\phi_+(\calM_\beta)$, $M_A(\beta)$ and $K_\bullet(S_A,\beta)$ precise. 
We determine in Corollary~\ref{5} the exact set of parameters for which the first and last of these  agree. 
In view of \cite{MMW05}, this provides a considerable sharpening of \cite[Thm.~4.6]{GKZ90} expressing GKZ-systems in terms of twisted Gau\ss--Manin systems, stated in Corollary~\ref{5}.
In \cite[Thm.~2.11]{GKZ90}, this latter result is used in the homogeneous case to show that the generic monodromy representation on the (solution) space of $A$-hypergeometric functions is irreducible for non-resonant $\beta$.

The parameters identified in Corollary~\ref{5} are precisely those for which left-multiplication by $\del_i$ induces a quasi-isomorphism on $K_\bullet(S_A,\beta)$ for each $i=1,\dots,n$. 
We show in Corollary~\ref{36} that, given $\beta$, $K_\bullet(S_A,\beta+k\sum_{i=1}^n\bolda_i)$ has this property for $0\ll k\in\NN$. 
By Remark~\ref{24}, left-multiplication by $\del_i$ is a quasi-isomorphism on $K_\bullet(S_A,\beta)$ if and only if the \emph{contiguity operator} $\del_i\colon K_\bullet(S_A,\beta+k\bolda_i)\to K_\bullet(S_A,\beta+(k+1)\bolda_i)$ is a quasi-isomorphism for all $k\in\NN$ (which holds in general for $k\gg 0$). 
In this way, $K_\bullet(S_A[\del_A^{-1}],\beta)$ arises as the direct limit of the Euler--Koszul complexes $K_\bullet(S_A,\beta+k\sum_{i=1}^n\bolda_i)$ induced by the contiguity operators.

If $\beta$ is outside the set discussed in Corollary~\ref{5}, what is the ``difference'' between the Euler--Koszul complex $K_\bullet(S_A,\beta)$ and the direct image $\phi_+(\calM_\beta)$? 
In view of Corollary~\ref{36}, there is a natural (localization) map 
\begin{equation}\label{44}
K_\bullet(S_A,\beta)\to\phi_+(\calM_\beta)
\end{equation}
realized by a suitable (product of) contiguity operator(s). 
Example~\ref{23} suggests that there might be a filtration on the cone of \eqref{44} whose graded pieces consist of direct images of $\calM_\beta$ under $T$-orbit maps to border tori of $O$, that is, tori forming $\Bar O\smallsetminus O$.
In essence this would ask the following.

\begin{prb}\label{45}
Is there a relation between local cohomology of the Euler--Koszul complex $K_\bullet(S_A,\beta)$ with $T$-invariant support and direct images of $\calD_T$-modules $\calM_\beta$ under $T$-orbit maps to border tori of $O$?
\end{prb}

In the special case where $S_A$ is Cohen--Macaulay and hence $K_\bullet(S_A,\beta)$ is a resolution of $M_A(\beta)$, the local cohomology of hypergeometric systems was studied by Okuyama \cite{Oku06a}.
His main result \cite[Thm.~3.12]{Oku06a} shows indeed some similarity with Theorem~\ref{32} in Section~\ref{33} where we explicitly describe the direct images in Problem~\ref{45}.
Problem~\ref{45} also motivates to study general $\ZZ^d$-graded localizations of Euler--Koszul complexes.
This is the subject of Section~\ref{39}, where we generalize study the Euler--Koszul functor on direct limits of toric modules generalizing ideas from \cite{MMW05} and \cite{Oku06a}.

\section{Direct image via torus action}\label{17}

In this section we determine the direct image (complex) of $\calM_\beta$ under $\phi\colon T\to O\into X^*$. 
The orbit map $T\cong T\cdot\boldone_A=O$  identifies the coordinate ring $\CC[t^{\pm\bolda_1},\dots,t^{\pm\bolda_n}]\cong\CC[\ZZ^d]$ of $T$ with the coordinate ring $S_A[\del^{-1}_A]$ of $O$ where
$\del^{-1}_A:=\del_1^{-1},\dots,\del_n^{-1}$. 
The inclusion of the closure $\Bar O\subseteq X^*$ corresponds to the canonical projection $R_A\onto S_A$.

Put $E=E_A=E_1,\dots,E_d$ where $E_i$ is as in \eqref{42}. 
As $I_A$ is $\ZZ^d$-graded, each $E_i$, and hence $E_i-\beta_i$ as well, acts by right multiplication on representatives of classes in $D_A[\del_A^{-1}]/D_A[\del_A^{-1}]\cdot I_A=S_A[x_A,\del_A^{-1}]$. 
The total complex induced by these $d$ operators $E-\beta$ is the \emph{right Koszul complex}
induced by $E-\beta$. 

\begin{prp}\label{1}
The Fourier transform of the direct image $\phi_+\calM(\beta)$ is represented by the right Koszul complex of $E-\beta$ on $S_A[x_A,\del_A^{-1}]$ which is acyclic except in degree $0$.
\end{prp}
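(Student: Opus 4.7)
The plan is to compute $\phi_+\calM(\beta)$ by applying $\phi_+$ to a canonical Koszul resolution of $\calM(\beta)$ on $T$ and interpreting the result after Fourier transform.

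First, I would establish that $\phi_+\calM(\beta)$ is concentrated in cohomological degree zero. The map $\phi$ factors as
\[
T\xrightarrow{\sim}O\hookrightarrow\Bar O\hookrightarrow X^*,
\]
namely an isomorphism, an open immersion of affine varieties, and a closed immersion. Direct image under each preserves concentration in degree $0$ (affineness for the open one, Kashiwara for the closed one), so $\phi_+\calM(\beta)$ is a single $\calD_{X^*}$-module. It then suffices to exhibit a complex that represents it and identify its $0$-th cohomology.

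Second, because the operators $\del_{t_i}t_i+\beta_i$, $i=1,\dots,d$, commute pairwise in $\calD_T$, they yield a Koszul resolution of $\calM(\beta)$ by free $\calD_T$-modules. Applying $\phi_+$ term by term produces a complex whose components are copies of $\phi_+\calD_T$, represented on global sections by the transfer bimodule $\phi_\ast\calD_{X^*\leftarrow T}$, and whose differentials are induced by right multiplication by $\del_{t_i}t_i+\beta_i$. Under Fourier transform---which sends the coordinates $\del_i$ on $X^*$ to the operators $\del_i$ on $X$ and the vector fields $\xi_i=\del_{\del_i}$ to $-x_i$---this transfer bimodule is identified with $S_A[x_A,\del_A^{-1}]$: the $\CC[T]$-factor becomes $S_A[\del_A^{-1}]$ via the orbit isomorphism $t^{\bolda_j}\leftrightarrow\del_j$, while the fiber variables produce the polynomial algebra $\CC[x_A]$.

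To match the differentials, I would use that the infinitesimal $T$-action on $X^*$ along the $i$-th direction is the vector field $\sum_j a_{ij}\del_j\xi_j$, whose Fourier transform equals $-E_i$ modulo an additive constant. Consequently, right multiplication by $\del_{t_i}t_i+\beta_i$ corresponds, up to constants that do not affect Koszul homology, to right multiplication by $E_i-\beta_i$ on $S_A[x_A,\del_A^{-1}]$. This exhibits the right Koszul complex of $E-\beta$ on $S_A[x_A,\del_A^{-1}]$ as a representative of (the Fourier transform of) $\phi_+\calM(\beta)$.

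The main obstacle is the careful bookkeeping through the Fourier transform: precisely identifying $\phi_\ast\calD_{X^*\leftarrow T}$ with $S_A[x_A,\del_A^{-1}]$ as a bimodule and verifying that the Koszul differentials match up modulo the harmless constant shifts. Once these are pinned down, acyclicity outside degree $0$ is automatic from the concentration of $\phi_+\calM(\beta)$ established at the outset.
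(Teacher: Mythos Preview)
Your strategy matches the paper's: resolve $\calM(\beta)$ by the Koszul complex of $\del_{t_i}t_i+\beta_i$ on $\calD_T$, push forward along $\phi$, and identify the resulting complex after Fourier transform with the right Koszul complex of $E-\beta$ on $S_A[x_A,\del_A^{-1}]$. Two points deserve attention.

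First, your factorization $T\cong O\hookrightarrow\Bar O\hookrightarrow X^*$ routes through the possibly singular variety $\Bar O=\Spec(S_A)$, so invoking $\calD$-modules on $\Bar O$ directly requires care. The paper sidesteps this by factoring instead as the closed embedding $\iota\colon T\hookrightarrow Y^*:=X^*\smallsetminus\Var(\del_1\cdots\del_n)$ followed by the open embedding $\varpi\colon Y^*\hookrightarrow X^*$; both $Y^*$ and $X^*$ are smooth, so the transfer-module computation stays on smooth ground throughout.

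Second, your claim that the additive constants arising in the differential identification ``do not affect Koszul homology'' is not correct as stated: a shift $\beta\mapsto\beta+c$ genuinely changes $K_\bullet(-,\beta)$ and its homology, so landing on $E-\beta$ rather than on $E-(\beta+c)$ is the whole point. The paper carries out exactly this bookkeeping: it computes the left action of $t_i\del_{t_i}$ on $\calD_{T\to Y^*}$ via the chain rule, obtains $-\sum_j a_{i,j}\del_jx_j$ after identifying $\iota_*\calD_{T\to Y^*}$ with $\calD_{Y^*}/I_A\calD_{Y^*}$, and then transposes to get precisely $E_i$ acting from the right on $\calD_{Y^*}/\calD_{Y^*}I_A$, with no residual constant. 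The obstacle you flag is real and is where the argument does its actual work; when you carry it through you will see the constants cancel, but you cannot dismiss them a priori.
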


\begin{proof}
We use the abbreviation $\del_tt:=\del_{t_1}t_1,\dots,\del_{t_d}t_d$. 
Then the right Koszul complex of $-\del_tt-\beta$ on $\calD_T$ is a free $\calD_T$-resolution of $\calM(\beta)$.
Since $T$ is $\calD$-affine it suffices to check this on global sections.
But grading the global section complex with respect to the order filtration yields the Koszul complex of $-\del_tt$ on the Laurent polynomial ring in the variables $t^{\pm1}=t_1^{\pm 1},\dots,t_d^{\pm1}$ and $\del_t=\del_{t_1},\dots,\del_{t_d}$ which is clearly exact.

In order to compute the (Fourier transformed) direct image of this complex we factorize $\phi=\varpi\circ\iota$ into the closed embedding 
\begin{equation}\label{19}
\iota:T\into X^*\smallsetminus\Var(\del_1\cdots\del_n)=:Y^*
\end{equation}
and the open embedding 
\begin{equation}\label{20}
\varpi:Y^*\into X^*.
\end{equation}
The direct image $\iota_+\calM(\beta)$ is represented by the right Koszul complex of $-\del_tt-\beta$ on $\iota_+\calD_T=\iota_*\calD_{Y^*\gets T}$ where $\calD_{Y^*\gets T}$ is the transfer $(\iota^{-1}\calD_{Y^*},\calD_T)$-bimodule \cite[VI.5.1]{Bor87}.
Since $\iota$ is a closed embedding one can identify $\iota_+\calD_T$ and $\calD_{Y^*}/\calD_{Y^*}I_A$ as left $\calD_{Y^*}$-modules \cite[VI.7.3]{Bor87} and we have to verify that the right-action of $-\del_{t_i}t_i$ on this module translates into that of $E_i$ under this identification.
The transpose of $\calD_{Y^*\gets T}$ is the $(\calD_T,\iota^{-1}\calD_{Y^*})$-bimodule
\[
\calD_{T\to Y^*}=\iota^*\calD_{Y^*}=
\calO_T
\otimes_{\iota^{-1}\calO_{Y^*}}
\iota^{-1}\calD_{Y^*}
\]
whose left structure is given by the chain rule of differentiation \cite[VI.4.1]{Bor87}.
The transpose $t_i\del_{t_i}$ of $-\del_{t_i}t_i$ acts from the left on $\calD_{T\to Y^*}$ by
\[
t_i\del_{t_i}+\sum_{j=1}^nt_i\frac{\del t^{\bolda_j}}{\del t_i}\del_{\del_j}
=t_i\del_{t_i}-\sum_{j=1}^na_{i,j}t^{\bolda_j}x_j
\]
where $-x_j$ is considered as partial derivative $\del_{\del_j}$ with respect to $\del_j$ via the Fourier transform. 
Under the identification
\[
\iota_*\calD_{T\to Y^*}=\iota_*\calO_T\otimes_{\calO_{Y^*}}\calD_{Y^*}=\calD_{Y^*}/I_A\calD_{Y^*}
\]
this becomes $-\sum_{j=1}^na_{i,j}\del_jx_j$ whose transpose is $E_i$.
Therefore the right action of $\iota_+(-\del_{t_i}t_i)$ on $\iota_+\calD_T=\iota_*\calD_{Y^*\gets T}=\calD_{Y^*}/\calD_{Y^*}I_A$ coincides with that of $E_i$.
As $\iota$ is affine, the direct image functor $\iota_+$ is exact \cite[Prop.~VI.8.1]{Bor87}.
Thus the direct image $\iota_+\calM(\beta)$ is represented by the right Koszul complex of $E-\beta$ on $\calD_{Y^*}/\calD_{Y^*}I_A$ which is acyclic except in degree $0$.

The direct image functor $\varpi_+$ for the open embedding is the exact functor $\varpi_*$ \cite[VI.5.2]{Bor87}.
As $Y^*$ is affine and since $\Gamma(X^*,\varpi_*(\calD_{Y^*}/\calD_{Y^*}I_A))=S_A[x_A,\del_A^{-1}]$, $\phi_+\calM=\varpi_+\iota_+\calM(\beta)$ is represented by the acyclic right Koszul complex of $E-\beta$ on $S_A[x_A,\del^{-1}_A]$ as claimed.
\end{proof}

\begin{rmk}
We shall see an alternative proof for the acyclicity statement in Proposition \ref{1} in Remark~\ref{37}.\eqref{37a}.
\end{rmk}

\section{Euler--Koszul homology on localizations}\label{29}

In \cite{MMW05} a generalization of the right Koszul complex from Proposition~\ref{1} is developed as follows. 
Interpret right multiplication of $E_i$ on $S_A[x_A,\del_A^{-1}]$ as the effect of the left $D_A$-linear endomorphism $E_i$ that sends a $\ZZ^d$-homogeneous $y$ to
\begin{equation}\label{9}
E_i\circ y:=(E_i-\deg_i(y))y
\end{equation}
and extending $\CC$-linearly. 
The advantage of this point of view is that the definition extends verbatim to any left $D_A$-module $M$ with $\ZZ^d$-grading (\ref{38}). 

For a $\ZZ^d$-graded $R_A$-module $N$, the \emph{Euler--Koszul complex} $K_\bullet(N,\beta)=K^A_\bullet(N,\beta)$ of $N$ with parameter $\beta\in\CC^d$ is the Koszul complex of these (obviously commuting) endomorphisms $E-\beta$ on $D_A\otimes_{R_A}N$.  
The homology of this complex in the category of left $D_A$-modules $H_\bullet(N,\beta)=H^A_\bullet(N,\beta)$ is the \emph{Euler--Koszul homology}, a generalization of the $A$-hypergeometric system $M_A(\beta)=H^A_0(S_A,\beta)$. 
In this section we determine when $K_\bullet(S_A,\beta)$ is a representative for $\phi_+(\calM_\beta)$. By Proposition~\ref{1}, this amounts to describing when $K_\bullet(S_A,\beta)$ and $K_\bullet(S_A[\del_A^{-1}],\beta)$ are quasi-isomorphic.

\medskip

Throughout, we identify a submatrix $\tau$ of columns of $A$ with the corresponding set of column indices.
Denoting $\del^{k\tau}=\prod_{j\in\tau}\del_j^k$ and $\del_\tau^k=(\del_j^k)_{j\in \tau}$, $N[\del_\tau^{-1}]$ is $\ZZ^d$-graded if so is $N$. 
With this setup, the right action of $E_i$ on $S_A[x_A,\del_A^{-1}]=D_A\otimes_{R_A}S_A[\del_A^{-1}]$ in Proposition \ref{1} coincides with the left action (\ref{9}). 
For the present section, (\ref{9}) is more convenient.

By the left and right Ore property of $D_A$, the left and right localizations of $D_A$ coincide. 
In particular there is for any $R_A$-module $M$ an isomorphism
\begin{equation}\label{11}
R_A[\del_\tau^{-1}]\otimes_{R_A}(D_A\otimes_{R_A} M)\cong
D_A\otimes_{R_A}(M[\del_\tau^{-1}])
\end{equation}
of $D_A$-modules where $x_i$ acts on the left module by $x_i\cdot(P\otimes Q)=[x_i,P]\otimes Q+P\otimes(x_i\cdot Q)$.
This induces an isomorphism $R_A[\del_\tau^{-1}]\otimes_{R_A}K_\bullet(S_A,\beta)\cong K_\bullet(S_A[\del_\tau^{-1}],\beta)$ and, by exactness of localization, a corresponding isomorphism in homology.

\begin{cor}\label{2}
The complex $K_\bullet(S_A,\beta)$ represents $\phi_+\calM(\beta)$ if and only if left-multiplication by $\del_i$ is invertible on $H_\bullet(S_A,\beta)$ for $i=1,\dots,n$.\qed
\end{cor}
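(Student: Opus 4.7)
The argument is essentially a bookkeeping exercise combining Proposition~\ref{1} with the Ore localization isomorphism (\ref{11}), together with the observation that localizing at the product $\del_1\cdots\del_n$ agrees with inverting each $\del_i$ individually.

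First, I would unpack the identification of $\phi_+\calM(\beta)$ with an Euler--Koszul complex. By Proposition~\ref{1}, $\phi_+\calM(\beta)$ is represented by the right Koszul complex of $E-\beta$ acting on $S_A[x_A,\del_A^{-1}]$. The paragraph preceding the corollary records that, under the identification $S_A[x_A,\del_A^{-1}]=D_A\otimes_{R_A}S_A[\del_A^{-1}]$, this right multiplication by $E_i$ coincides with the left endomorphism \eqref{9}. Thus $\phi_+\calM(\beta)$ is represented by $K_\bullet(S_A[\del_A^{-1}],\beta)$, and $K_\bullet(S_A,\beta)$ represents $\phi_+\calM(\beta)$ if and only if the canonical map
\[
K_\bullet(S_A,\beta)\longrightarrow K_\bullet(S_A[\del_A^{-1}],\beta)
\]
is a quasi-isomorphism.

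Next, I would apply \eqref{11} with $\tau=A$: it identifies the target of this map with $R_A[\del_A^{-1}]\otimes_{R_A}K_\bullet(S_A,\beta)$. Since localization of $R_A$-modules at $\del_A$ is exact, passing to homology gives
\[
H_\bullet\bigl(K_\bullet(S_A[\del_A^{-1}],\beta)\bigr)\cong R_A[\del_A^{-1}]\otimes_{R_A}H_\bullet(S_A,\beta),
\]
and the canonical map above induces on homology precisely the localization morphism $H_\bullet(S_A,\beta)\to H_\bullet(S_A,\beta)[\del_A^{-1}]$.

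Finally, the localization map of an $R_A$-module at $\del_A=\del_1\cdots\del_n$ is an isomorphism exactly when multiplication by $\del_A$ is bijective, which for a $\ZZ^d$-graded module is in turn equivalent to bijectivity of multiplication by each individual $\del_i$. Combining with the previous paragraph yields the stated equivalence. The only subtle point is the identification of right and left $E_i$-action on the localized module, which is already taken care of in the preamble to the corollary, so no genuine obstacle remains.
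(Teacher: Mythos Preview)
Your argument is correct and matches the paper's approach: the corollary is marked with a \qed\ because it follows directly from the preamble, which already records that $K_\bullet(S_A[\del_A^{-1}],\beta)\cong R_A[\del_A^{-1}]\otimes_{R_A}K_\bullet(S_A,\beta)$ with a corresponding isomorphism in homology by exactness of localization, together with the identification of the right Koszul complex in Proposition~\ref{1} with $K_\bullet(S_A[\del_A^{-1}],\beta)$. The only superfluous remark is your appeal to the $\ZZ^d$-grading in the last step: invertibility of the product $\del_1\cdots\del_n$ on a module is equivalent to invertibility of each factor regardless of any grading, since the $\del_i$ commute.
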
 

Gel'fand et al.\ \cite[Thm.~4.6]{GKZ90} show that if $S_A$ is homogeneous and $\beta$ non-resonant for $A$ then the $A$-hypergeometric system $M_A(\beta)=H_0(S_A,\beta)$ represents $\phi_+\calM(\beta)$.
The non-resonance condition means that, for each proper face $F$ of the cone $\QQ_+A$, 
\[
\beta\not\in\CC F+\ZZ^d.
\]
By \cite[Prop.~5.3]{MMW05}, non-resonance implies that the Euler-Koszul complex is a resolution.
Homogeneity of $S_A$ is equivalent to $(1,\dots,1)$ being in the row span of $A$.

We shall describe the set of parameters $\beta$ for which
$K_\bullet(S_A,\beta)$ represents $\phi_+\calM(\beta)$ without any
homogeneity assumption.  At the same time we weaken the non-resonance
condition.  To do so we give a description of the parameter set in
$\CC^d$ for which left-multiplication by $\del_j$ is invertible on
$H_\bullet(S_A,\beta)$.

\begin{lem}\label{3}
Left-multiplication by $\del_j$ is injective on $D_A/D_AI_A$ for $j=1,\dots,n$.
\end{lem}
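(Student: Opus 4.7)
The plan is to write down $D_A/D_AI_A$ explicitly enough that left-multiplication by $\del_j$ becomes a transparent first-order operator on a familiar space. Using the right normal form $\sum_\beta x^\beta g_\beta(\del_A)$ of Weyl-algebra elements and the fact that $I_A\subseteq R_A$ is an ordinary commutative ideal, I would first observe that $D_AI_A=\CC[x_A]\otimes_\CC I_A$ as subspaces of the $\CC$-vector space $D_A=\CC[x_A]\otimes_\CC R_A$, and hence
\[
D_A/D_AI_A\cong\CC[x_A]\otimes_\CC S_A
\]
as left $\CC[x_A]$-modules.

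Next I would transport left-multiplication by $\del_j$ to this identification. The commutator $[\del_j,x^\beta]=\beta_jx^{\beta-e_j}$ shows that $\del_j$ carries the class of $x^\beta\otimes s$ to $x^\beta\otimes\bar\del_js+\beta_j x^{\beta-e_j}\otimes s$, where $\bar\del_j\in S_A$ denotes the image of $\del_j$ (so $\del_j$ acts as $\bar\del_j+\partial/\partial x_j$). Assume a finite sum $P=\sum_\beta x^\beta\otimes s_\beta$ lies in the kernel of $\del_j$. Reading off the coefficient of each monomial $x^\gamma$ in $\del_jP$ yields the recursion
\[
\bar\del_js_\gamma+(\gamma_j+1)\,s_{\gamma+e_j}=0\qquad(\gamma\in\NN^n).
\]

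To finish, I would invoke that the toric ring $S_A\cong\CC[\NN A]$ is an integral domain and that $\bar\del_j$ corresponds to the nonzero monomial $t^{\bolda_j}$, so $\bar\del_j$ is a non-zerodivisor on $S_A$. The recursion then expresses $s_{\gamma+ke_j}$ as a nonzero scalar multiple of $\bar\del_j^ks_\gamma$, which remains nonzero for every $k\ge0$ as soon as $s_\gamma\ne0$. This contradicts the finite support of $P$, forcing each $s_\gamma=0$ and hence $P=0$. The only mildly delicate point is the bookkeeping that passes $\del_j$ through the right normal form; once the action is correctly written in the form $\bar\del_j+\partial/\partial x_j$, the contradiction between the domain property of $S_A$ and the boundedness of the $x_j$-degree of $P$ drives the argument.
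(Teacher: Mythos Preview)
Your argument is correct. Both your proof and the paper's rest on the same two ingredients: the identification of $D_A/D_AI_A$ with $\CC[x_A]\otimes_\CC S_A$ as a $\CC$-vector space, and the fact that $S_A$ is a domain so that $\bar\del_j$ is a non-zerodivisor.

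The packaging differs. The paper introduces the weight $L_A=(\boldone_A,\boldzero_A)$, grading by total $x$-degree, and observes that $\gr^{L_A}(D_A/D_AI_A)=\CC[x_A]\otimes_\CC S_A$ is a domain; since $\del_j$ is $L_A$-homogeneous, injectivity on the associated graded implies injectivity on the filtered module. Your recursion is exactly what this filtration argument unwinds to: the term $\beta_j x^{\beta-e_j}\otimes s$ drops the $x$-degree and hence vanishes in $\gr^{L_A}$, leaving only multiplication by $\bar\del_j$. Your explicit contradiction via unbounded $x_j$-degree is the hands-on counterpart of the standard lemma ``injective on $\gr$ implies injective''. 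The paper's formulation is shorter and makes the mechanism (leading-term analysis) transparent at a glance; yours has the virtue of being self-contained and not invoking any filtration machinery.
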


\begin{proof}
Consider the weight vector $L_A=(\boldone_A,\boldzero_A)$ where $\boldone_A=(1,\dots,1)\in\ZZ^n$ and $\boldzero_A=(0,\dots,0)\in\ZZ^n$.
Since $\del_j$ is $L_A$-homogeneous, it suffices to check the statement after grading with respect to
$L_A$. 
But $\gr^{L_A}(D_A/D_AI_A)=\CC[x_A]\otimes_\CC R_A/I_A$.
Thus, $\gr^{L_A}(D_A/D_AI_A)$ is a domain and multiplication by $\del_j$ injective.
\end{proof}

In \cite[Def.~5.2]{MMW05}, the following notion was introduced.

\begin{dfn}\label{41}
For a finitely generated $\ZZ^d$-graded $R_A$-module $M$, the set of \emph{quasi-degrees} $\qdeg(M)$ is the Zariski closure of the set $\deg(M)$ of all $\alpha\in\ZZ^d$ for which $M_\alpha\ne0$.
\end{dfn}

We can now formulate an important class of parameters.

\begin{dfn}
Let
\[
\SRes_j(A):=\{\beta\mid
-\beta\in(\NN+1)\bolda_j+\qdeg(S_A/\ideal{\del_j})\};
\]
we let $\SRes(A):=\bigcap_{j=1}^n\SRes_j(A)$ be the \emph{strongly resonant parameters of $A$}.
\end{dfn}
\begin{thm}\label{6}
For $j=1,\dots,n$ the following conditions are equivalent:
\begin{enumerate}
\item\label{6a} $\beta\notin\SRes_j(A)$
\item\label{6b} Left-multiplication by $\del_j$ is a quasi-isomorphism
on $K_\bullet(S_A,\beta)$.
\end{enumerate}
\end{thm}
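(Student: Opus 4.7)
The plan is to reduce the self-map question to a family of contiguity questions, each of which is handled by the standard quasi-degree vanishing criterion of \cite{MMW05}. The identity $[E_i,\del_j]=-a_{i,j}\del_j$ combined with the grading correction built into \eqref{9} shows that left multiplication by $\del_j$ commutes with every $E_i\circ-\beta_i$, so $\del_j$ is a chain endomorphism of $K_\bullet(S_A,\beta)$. By \eqref{11} applied with $\tau=\{j\}$, left multiplication by $\del_j$ is invertible on $H_\bullet(S_A,\beta)$ if and only if the natural morphism $K_\bullet(S_A,\beta)\to K_\bullet(S_A[\del_j^{-1}],\beta)$ is a quasi-isomorphism.

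The second step is to identify $K_\bullet(S_A[\del_j^{-1}],\beta)$ with the direct limit $\dlim_k K_\bullet(S_A,\beta+k\bolda_j)$ under the contiguity operators $\del_j\colon K_\bullet(S_A,\beta+k\bolda_j)\to K_\bullet(S_A,\beta+(k+1)\bolda_j)$. These contiguities arise by applying the Euler--Koszul functor to the graded short exact sequence
\[ 0\to S_A(\bolda_j)\xrightarrow{\del_j}S_A\to S_A/\ideal{\del_j}\to 0 \]
at the appropriate shifts, and the identification on the limit follows from \eqref{11} combined with the presentation $S_A[\del_j^{-1}]=\dlim_k S_A(-k\bolda_j)$. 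A two-out-of-three argument in this direct system, reapplying \eqref{11} at every shifted parameter, then shows that the localization map is a quasi-isomorphism if and only if every single contiguity operator is a quasi-isomorphism.

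The third step characterizes each individual contiguity. The long exact sequence associated to the above short exact sequence at parameter $\beta+(k+1)\bolda_j$ shows that the $k$-th contiguity is a quasi-isomorphism if and only if $H^A_\bullet(S_A/\ideal{\del_j},\beta+(k+1)\bolda_j)=0$. Since $S_A/\ideal{\del_j}$ is a finitely generated toric $R_A$-module, \cite[Prop.~5.3]{MMW05} converts this vanishing into $-\beta\notin(k+1)\bolda_j+\qdeg(S_A/\ideal{\del_j})$. Taking the conjunction over all $k\ge 0$ gives $-\beta\notin(\NN+1)\bolda_j+\qdeg(S_A/\ideal{\del_j})$, which is precisely condition \ref{6a}.

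The main obstacle is the propagation argument in the second step. The backward direction (every contiguity a quasi-iso implies the localization map is a quasi-iso) is immediate from closure of quasi-isomorphisms under filtered colimits. The forward direction is more delicate: one must argue that $\del_j$ being a self-quasi-iso on $K_\bullet(S_A,\beta)$ forces the analogous statement at $\beta+k\bolda_j$ for every $k\ge 0$, and then invoke two-out-of-three in the triangle comparing $K_\bullet(S_A,\beta+k\bolda_j)$ with $K_\bullet(S_A[\del_j^{-1}],\beta+k\bolda_j)$ (the latter being isomorphic to $K_\bullet(S_A[\del_j^{-1}],\beta)$ through the contiguity $\del_j^{-k}$, which is well-defined after localization). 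Lemma~\ref{3} plays a supporting role here by confirming that $\del_j$ is injective on $D_A/D_AI_A$, so that none of the maps degenerate at the module level.
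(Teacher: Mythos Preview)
Your backward direction (\ref{6a}) $\Rightarrow$ (\ref{6b}) is fine: if every contiguity in the tower is a quasi-isomorphism then so is the map into the direct limit, and by \eqref{11} this is the localization map, whence the self-map by $\del_j$ is invertible on homology.

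The forward direction, however, has a real gap. You correctly identify the obstacle---one must show that the self-map $\del_j$ being a quasi-isomorphism on $K_\bullet(S_A,\beta)$ propagates to $K_\bullet(S_A,\beta+k\bolda_j)$ for every $k\ge 0$---but you do not supply an argument, and none of the tools you invoke (two-out-of-three, Lemma~\ref{3}, the isomorphism \eqref{11}) furnish one. Knowing that the composite $H_\bullet(S_A,\beta)\to\dlim_k H_\bullet(S_A,\beta+k\bolda_j)$ is an isomorphism does not force each transition map in the tower to be one; nor does the commutative square relating the contiguity at level $k$ with the localized contiguity, since only two of its four edges are known to be isomorphisms. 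The claim you need is in fact \emph{equivalent} to the implication (\ref{6b}) $\Rightarrow$ (\ref{6a}) you are trying to prove (once one unwinds it via your step~3), so the argument is circular as written.

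The paper bypasses this difficulty by analyzing the cokernel $C'_\bullet$ of the self-map directly, rather than routing through the contiguity tower. The key move---absent from your proposal---is to delete the $j$-th column and view $C'_\bullet$ over the smaller Weyl algebra $D_{A'}$: the identification $D_A/(D_AI_A+\del_jD_A)\cong D_{A'}\otimes_{R_{A'}}S_A/\ideal{\del_j}\otimes_\CC\CC[x_j]$ together with the computation $E_i\circ(P'\otimes Q'\otimes x_j^k)=(E'_i-(k+1)a_{i,j})\circ(P'\otimes Q'\otimes x_j^k)$ splits $C'_\bullet$ as the \emph{direct sum} $\bigoplus_{k\ge 0}K^{A'}_\bullet(S_A/\ideal{\del_j},\beta+(k+1)\bolda_j)\,x_j^k$. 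Now exactness of $C'_\bullet$ is literally the conjunction over $k$ of the exactness of each summand, and \cite[Prop.~5.3]{MMW05} converts each into the quasi-degree condition. This gives both directions at once, with no propagation step needed. (As Remark~\ref{24} emphasizes, the cokernel of the self-map and the cokernel of a single contiguity have the same underlying modules but \emph{different} differentials; the $D_{A'}$-decomposition is what makes the self-map cokernel tractable.)
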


\begin{proof}
Without loss of generality we may assume $j=n$. 
By nature of the Euler--Koszul complex and Lemma~\ref{3}, left-multiplication by $\del_n$ defines a chain map
\begin{equation}\label{27}
\del_n\cdot\colon K_\bullet(S_A,\beta)\to K_\bullet(S_A,\beta)
\end{equation}
We prove that its cokernel $C'_\bullet$, and thus by Lemma~\ref{3} its cone, is exact precisely if $\beta$ is not strongly resonant for $A$.

Let $A'$ be obtained from $A$ by deleting the last column $\bolda_n$, set $E'=E_{A'}$ and note that $S_A/\ideal{\del_n}$ is a toric $R_{A'}$-module.
Each $C'_k$ is a direct sum of copies of the $D_{A'}$-module 
\[
D_A/(D_AI_A+\del_nD_A)\cong
D_{A'}\otimes_{R_{A'}}S_A/\ideal{\del_n}\otimes_\CC\CC[x_n]. 
\]
Note that the element $1\otimes(1+\ideal{\del_n})\otimes x_n^k$ lies in degree $k\bolda_n$ as $E$ is $\ZZ^d$-homogeneous of degree zero. 
In order to compute the induced differential in $C'_\bullet$, pick $\ZZ^d$-homogeneous $P'\in D_{A'}$ and $Q'\in S_A/\ideal{\del_n}$. 
Then for $k\in\NN$ one computes
\begin{align*}
E_i\circ(P'\otimes Q'\otimes x_n^k)
&=(E'_i+a_{i,n}x_n\del_n-\deg_i(P'\otimes Q'\otimes x_n^k))\cdot(P'\otimes Q'\otimes x_n^k)\\
&=(E'_i-\deg_i(P'\otimes Q')-(k+1)a_{i,n})\cdot(P'\otimes Q'\otimes x_n^k)\\
&=(E'_i-(k+1)a_{i,n})\circ(P'\otimes Q'\otimes x_n^k)
\end{align*}
It follows that $C'_\bullet$ decomposes as the sum of $D_{A'}$-complexes
\[
C'_\bullet=\bigoplus_{k\ge0}K^{A'}_\bullet(S_A/\ideal{\del_n},\beta+(k+1)\bolda_n)x_n^k.
\]
By \cite[Prop.~5.3]{MMW05}, exactness of the $k$-th summand is
equivalent to $-\beta\notin(k+1)\bolda_n+\qdeg(S_A/\ideal{\del_n})$
and the equivalence follows.
\end{proof}

\begin{rmk}
\label{24}
Consider the cokernel $C_\bullet$ of the (injective) chain map
\begin{equation}\label{25}
\cdot\del_n\colon K_\bullet(S_A,\beta)\to K_\bullet(S_A,\beta+\bolda_n)
\end{equation}
induced by $\del_n$ acting by right-multiplication on $S_A$.  
The modules of $C_\bullet$, considered as 
$D_{A'}$-modules, have a direct sum decomposition equal
to those of the complex $C'_\bullet$ that appears in the proof of
Theorem \ref{6} as cokernel of left-multiplication by $\del_n$ on
$K_\bullet(S_A,\beta)$.  However, the differentials in $C_\bullet$ and
$C'_\bullet$ are not the same: the differential in $C_\bullet$ is
$D_A$-linear, while that of $C'_\bullet$ is only $D_{A'}$-linear. 
It follows from \cite{MMW05} that \eqref{25} is a quasi-isomorphism if
and only if $-\beta\notin\bolda_n+\qdeg(S_A/\ideal{\del_n})$.  
\end{rmk}

The following corollary is the promised sharpening of \cite[Thm.~4.6]{GKZ90}; it determines when the hypergeometric module $M_A(\beta)$ is isomorphic to $\phi_+\calM(\beta)$.
\begin{cor}\label{5}
The following are equivalent:
\begin{enumerate}
\item\label{5a} $\beta\notin\SRes(A)$;
\item\label{5b} $K_\bullet(S_A,\beta)$ represents $\phi_+\calM(\beta)$;
\item\label{5c} $M_A(\beta)$ is naturally isomorphic to $\phi_+\calM(\beta)$. 
\end{enumerate}
\end{cor}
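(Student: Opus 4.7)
My plan is to string together the preceding results into the three-way equivalence. The equivalence $(a)\Leftrightarrow(b)$ is a direct combination of Corollary \ref{2} and Theorem \ref{6}: Corollary \ref{2} characterizes $(b)$ as left multiplication by each $\del_i$ being invertible on $H_\bullet(S_A,\beta)$, and since $\del_i$ is an injective chain endomorphism of $K_\bullet(S_A,\beta)$ by Lemma \ref{3}, this is the same as $\del_i$ being a quasi-isomorphism on the complex; by Theorem \ref{6} this in turn happens precisely when $\beta\notin\SRes_i(A)$. Requiring this for every $i=1,\ldots,n$ recovers $\beta\notin\SRes(A)$.

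The implication $(b)\Rightarrow(c)$ is immediate from Proposition \ref{1}: since $\phi_+\calM(\beta)$ is concentrated in degree zero, $(b)$ forces $H_k(S_A,\beta)=0$ for $k\geq 1$ and identifies $M_A(\beta)=H_0(S_A,\beta)$ with $\phi_+\calM(\beta)$ via the natural map \eqref{44}.

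For $(c)\Rightarrow(a)$ I would argue as follows. The natural map $M_A(\beta)\to\phi_+\calM(\beta)$ is the one induced on $H_0$ by the localization $K_\bullet(S_A,\beta)\to K_\bullet(S_A[\del_A^{-1}],\beta)$, so it coincides with $M_A(\beta)\to M_A(\beta)[\del_A^{-1}]\cong\phi_+\calM(\beta)$; its being an isomorphism forces each $\del_i$ to act surjectively on $M_A(\beta)=H_0(S_A,\beta)$. From the short exact sequence $0\to K_\bullet(S_A,\beta)\xrightarrow{\del_i\cdot}K_\bullet(S_A,\beta)\to C'_\bullet\to 0$ (injectivity by Lemma \ref{3}), surjectivity of $\del_i$ on $H_0$ is equivalent to $H_0(C'_\bullet)=0$. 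Invoking the decomposition $C'_\bullet=\bigoplus_{k\ge 0}K^{A'}_\bullet(S_A/\ideal{\del_i},\beta+(k+1)\bolda_i)x_i^k$ from the proof of Theorem \ref{6} together with the \cite{MMW05} criterion equating $H^{A'}_0(N,\gamma)=0$ with $-\gamma\notin\qdeg(N)$, this yields $-\beta\notin(k+1)\bolda_i+\qdeg(S_A/\ideal{\del_i})$ for all $k\ge 0$, i.e.\ $\beta\notin\SRes_i(A)$. Ranging over $i$ gives $(a)$.

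The main obstacle is this last step: we must extract the full strong non-resonance condition from the mere surjectivity of each contiguity operator on the single module $M_A(\beta)$. The key is that the cokernel of $\del_i$-multiplication on $K_\bullet(S_A,\beta)$ already encodes, via its $H_0$, the cokernel on $H_0(S_A,\beta)$, and the cone decomposition from Theorem \ref{6} combined with the MMW $\qdeg$ criterion then governs vanishing of each summand separately, yielding the desired strong non-resonance.
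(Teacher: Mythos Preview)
Your argument is correct and follows essentially the same route as the paper's proof: the equivalence $(a)\Leftrightarrow(b)$ via Corollary~\ref{2} and Theorem~\ref{6}, the implication $(b)\Rightarrow(c)$ from acyclicity of $\phi_+\calM(\beta)$, and the implication $(c)\Rightarrow(a)$ by deducing invertibility of $\del_i$ on $M_A(\beta)$, hence $H_0(C'_\bullet)=0$, and then upgrading this to $\beta\notin\SRes_i(A)$ via the decomposition of $C'_\bullet$ and \cite[Prop.~5.3]{MMW05}. The only cosmetic differences are that the paper phrases $(b)\Rightarrow(c)$ by noting higher Euler--Koszul homology is $\del_A$-torsion, and for $(c)\Rightarrow(a)$ it simply transfers invertibility of $\del_A$ from $\phi_+\calM(\beta)$ across the isomorphism rather than identifying the natural map with localization.
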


\begin{proof}
By Theorem~\ref{6}, \eqref{5a} is equivalent to left-multiplication of any $\del_j$ on $H_\bullet(S_A,\beta)$ being an isomorphism. 
By Corollary~\ref{2}, this is equivalent to 
\[
H_\bullet(S_A,\beta)\cong H_\bullet(S_A[\del_A^{-1}],\beta)\cong
 H_\bullet(\phi_+\calM(\beta))
\]
and hence to \eqref{5b}. 
It remains to show the equivalence with \eqref{5c}.

If \eqref{5b} holds then left-multiplication by $\del_A$ is a quasi-isomorphism on $K_\bullet(S_A,\beta)$ by Corollary~\ref{2}.
Since higher Euler--Koszul homology is $\del_A$-torsion it must vanish in this case. 
It follows that when \eqref{5b} holds then $M_A(\beta)$ is quasi-isomorphic to $K_\bullet(S_A,\beta)$ and hence to $\phi_+\calM(\beta)$. 
Thus, \eqref{5b} implies \eqref{5c}.

Conversely, if \eqref{5c} holds, then $\phi_+\calM(\beta)$ is a resolution of $M_A(\beta)$. 
Since left-multiplication by $\del_A$ is always a quasi-isomorphism on $\phi_+\calM(\beta)$ by Proposition~\ref{1}, $\del_A$ is invertible on $M_A(\beta)$. 
For the cokernel $C'_\bullet$ of \eqref{27} we have then $H_0(C'_\bullet)=0$. 
Since $C'_\bullet$ decomposes into a sum of Euler--Koszul complexes over $D_{A'}$, vanishing of $H_0(C'_\bullet)=0$ is equivalent to vanishing of $H_\bullet(C'_\bullet)=0$ by \cite[Prop.~5.3]{MMW05}. 
It follows that \eqref{27} is a quasi-isomorphism and by Theorem~\ref{6} we conclude
that \eqref{5a} holds.
\end{proof}

It is natural to ask whether there are any parameters that satisfy the hypothesis of Corollary~\ref{5}. 
To answer this question, we denote 
\[
\eps_\tau:=-\deg(\del^\tau)=\sum_{j\in \tau}\bolda_j
\]
for any $\tau\subseteq A$.
Note that multiplication by the invertible function $t_i$ on $T$ defines an isomorphism of $\calD_T$-modules $\calM(\beta)\to\calM(\beta+\bolde_j)$ shifting the degree by $\bolda_j$.

\begin{cor}\label{8}
For fixed $\beta\in\CC^d$ and $k\gg0$, 
\[
\phi_+\calM(\beta)\simeq\phi_+\calM(\beta+k\eps_A)\simeq K_\bullet(S_A,\beta+k\eps_A)\simeq M_A(\beta+k\eps_A).
\]
\end{cor}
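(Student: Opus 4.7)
The plan is to chain three isomorphisms: the leftmost comes from a torus shift, and the two rightmost from Corollary~\ref{5} applied to the parameter $\beta+k\eps_A$. The first isomorphism $\phi_+\calM(\beta)\simeq\phi_+\calM(\beta+k\eps_A)$ follows from the remark preceding the statement: for each $j\in\{1,\dots,n\}$ multiplication by an invertible monomial on $T$ gives an isomorphism $\calM(\beta)\cong\calM(\beta+\bolda_j)$ of $\calD_T$-modules, so iterating once per column and repeating $k$ times, then applying $\phi_+$, delivers the isomorphism. The remaining two follow from Corollary~\ref{5} as soon as I show $\beta+k\eps_A\notin\SRes(A)$ for $k\gg 0$.

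Since $\SRes(A)=\bigcap_{j=1}^n\SRes_j(A)$, it suffices to prove that for each fixed $j$ the set $\{k\in\NN:\beta+k\eps_A\in\SRes_j(A)\}$ is bounded above, and then take the maximum over $j$. By the structure theory of toric modules \cite{MMW05}, $\qdeg(S_A/\ideal{\del_j})$ decomposes as a finite union of translates $\gamma+\CC F$, with $\gamma\in\ZZ^d$ and $F$ running through proper faces of $\QQ_+A$ that do not contain $\bolda_j$. Fix one such component. Since $F$ is the intersection of the facets of $\QQ_+A$ containing it and $\bolda_j\notin F$, there is a facet $F'\supseteq F$ avoiding $\bolda_j$; let $\ell\colon\CC^d\to\CC$ be the associated supporting linear form, nonnegative on $\QQ_+A$ with $\ker\ell=\CC F'\supseteq\CC F$. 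Then $\ell(\bolda_j)>0$ and $\ell(\eps_A)=\sum_i\ell(\bolda_i)\ge\ell(\bolda_j)>0$ are fixed positive reals.

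Membership $\beta+k\eps_A\in\SRes_j(A)$ through this component translates to $-\beta-k\eps_A=m\bolda_j+\gamma+v$ for some $m\ge 1$ and $v\in\CC F\subseteq\ker\ell$; applying $\ell$ yields
\[
k\ell(\eps_A)+m\ell(\bolda_j)=\ell(-\beta-\gamma).
\]
For integers $k\ge 0$ and $m\ge 1$ the left side is a real number at least $\ell(\bolda_j)>0$, which forces $\ell(-\beta-\gamma)$ to be real and gives the upper bound $k\le(\ell(-\beta-\gamma)-\ell(\bolda_j))/\ell(\eps_A)$. Maximizing these bounds over the finitely many components of $\qdeg(S_A/\ideal{\del_j})$ and over $j=1,\dots,n$ yields $k_0$ with $\beta+k\eps_A\notin\SRes_j(A)$ for every $j$ and every $k>k_0$. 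Corollary~\ref{5} then finishes the chain.

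The main obstacle I anticipate is justifying the structural description of $\qdeg(S_A/\ideal{\del_j})$ as a finite union of $\ZZ^d$-translates of complex spans of proper faces; this relies on $S_A/\ideal{\del_j}$ admitting a filtration whose quotients are shifts of toric face rings $S_A/P_F$, a standard fact I would invoke from \cite{MMW05} or the infrastructure set up in Section~\ref{39}. Beyond this input the argument is essentially the positivity statement that $\eps_A$ lies in the interior of $\QQ_+A$, so every facet-supporting linear form evaluates strictly positively on it, forcing the line $k\mapsto\beta+k\eps_A$ out of any bounded shift of a proper face-subspace.
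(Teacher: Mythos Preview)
Your proof is correct and follows the same strategy as the paper: reduce the last two isomorphisms to Corollary~\ref{5} by showing $\beta+k\eps_A\notin\SRes(A)$ for $k\gg 0$, and handle the first via the torus shift $\calM(\beta)\cong\calM(\beta+\bolda_j)$.

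The execution differs slightly. The paper argues in one line: elements of the cone $\QQ_+A$ are never strongly resonant, and since $\eps_A$ lies in its interior, $\beta+k\eps_A\in\QQ_+A$ for $k\gg 0$ (stated there for $\beta\in\QQ^d$). You instead unpack $\qdeg(S_A/\ideal{\del_j})$ into finitely many components $\gamma+\CC F$ with $\bolda_j\notin F$ and apply a facet-supporting linear form to bound $k$ directly. Your version is what one would write out to justify the paper's ``obviously'', and it has the virtue of handling arbitrary $\beta\in\CC^d$ without a separate reduction to the rational case. The structural input you flag---that the filtration quotients of $S_A/\ideal{\del_j}$ are shifted face rings $S_F(\boldb)$ with $\bolda_j\notin F$---is indeed available from \cite{MMW05} (any toric filtration will do, and $\del_j$ annihilating the module forces $j\notin F$ on each quotient).
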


\begin{proof}
Obviously, elements $\beta\in\QQ_+A\subseteq\QQ^d$ satisfy the condition of Corollary~\ref{5} and, for any $\beta\in\QQ^d$, we have $\beta+k\eps_A\in\QQ_+A$ for $k\gg0$.
\end{proof}

For $k\gg 0$ the contiguity operator $\cdot \del_j$ induces a
quasi-isomorphism between the complexes $K_\bullet(S_A,\beta+k\bolda_j)$ and
$K_\bullet(S_A,\beta+(k+1)k\bolda_j)$ by
Remark~\ref{24}. Thus, 
\begin{eqnarray*}
\dlim K_\bullet(S_A,\beta+k\bolda_j)&\simeq&
K_\bullet\left(\dlim(S_A\stackrel{\del_j}{\to}S_A\stackrel{\del_j}{\to}S_A\cdots),\beta\right)\\
&\simeq&K_\bullet(S_A[\del_j^{-1}],\beta)
\end{eqnarray*}
for $k\gg 0$. 
Hence, for $k\gg 0$, 
$\phi_+\calM(\beta)\simeq K_\bullet(S_A,\beta+k\eps_A)\simeq
K_\bullet(S_A[\del_A^{-1}],\beta)\simeq R_A[\del_A^{-1}]\otimes_{R_A}
K_\bullet(S_A,\beta)$. 

\begin{cor}\label{36}
For fixed $\beta\in\CC^d$ and $k\gg0$, 
\[\pushQED{\qed}
R_A[\del_A^{-1}]\otimes_{R_A}H_\bullet(S_A,\beta)=H_\bullet(S_A,\beta+k\eps_A)(k\eps_A).\qedhere
\]
\end{cor}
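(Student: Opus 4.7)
The plan is to realize $R_A[\del_A^{-1}]\otimes_{R_A}K_\bullet(S_A,\beta)$ as a direct limit of shifted Euler--Koszul complexes and then invoke Remark~\ref{24} to see that this direct system stabilizes for $k\gg 0$. First, the isomorphism \eqref{11} together with exactness of the $\del_A$-localization reduces the claim to showing
\[
H_\bullet(S_A[\del_A^{-1}],\beta)=H_\bullet(S_A,\beta+k\eps_A)(k\eps_A)\quad\text{for }k\gg 0.
\]

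Next, setting $\del^A:=\del_1\cdots\del_n$, I would write $S_A[\del_A^{-1}]$ as the colimit of the directed system
\[
S_A\xrightarrow{\cdot\del^A}S_A(\eps_A)\xrightarrow{\cdot\del^A}S_A(2\eps_A)\xrightarrow{\cdot\del^A}\cdots,
\]
where the grading shift by $\eps_A$ at each stage compensates for $\del^A$ having degree $-\eps_A$, making each transition a morphism of $\ZZ^d$-graded $R_A$-modules. Since the Euler--Koszul functor commutes with direct limits and, by unwinding definition \eqref{9}, satisfies $K_\bullet(N(\alpha),\beta)=K_\bullet(N,\beta+\alpha)(\alpha)$, this identifies
\[
K_\bullet(S_A[\del_A^{-1}],\beta)\simeq\dlim_{k}K_\bullet(S_A,\beta+k\eps_A)(k\eps_A).
\]

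The key step is to factor $\cdot\del^A=\cdot\del_1\cdots\cdot\del_n$ and apply Remark~\ref{24} to each factor: $\cdot\del_j$ induces a quasi-isomorphism between the corresponding intermediate Euler--Koszul complexes as long as $-(\beta+k\eps_A+\bolda_1+\cdots+\bolda_{j-1})\notin\bolda_j+\qdeg(S_A/\ideal{\del_j})$. By the same reasoning already used for Corollary~\ref{8}---namely that $\eps_A$ lies in the interior of $\QQ_+A$ while each $\qdeg(S_A/\ideal{\del_j})$ is contained in a finite union of translates of $\CC$-spans of proper faces of $\QQ_+A$---all $n$ such conditions are satisfied simultaneously for $k\gg 0$. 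Hence every transition in the direct system is then a quasi-isomorphism and the colimit agrees (up to quasi-isomorphism) with any sufficiently late term. Taking $H_\bullet$ yields the stated identity.

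The main obstacle I anticipate is the combined bookkeeping of grading shifts (keeping the $(k\eps_A)$ in the correct place as one passes from modules to Euler--Koszul complexes and into the direct limit) together with the simultaneous verification of the finitely many avoidance conditions from Remark~\ref{24} for all $j=1,\dots,n$. Once these are organized the remaining steps are formal.
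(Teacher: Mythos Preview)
Your proposal is correct and follows essentially the same route as the paper: realize $S_A[\del_A^{-1}]$ as a direct limit along right multiplication by $\del^A$, use that the Euler--Koszul functor commutes with this limit, and invoke Remark~\ref{24} (factored over $j=1,\dots,n$) together with the interior-of-cone argument from Corollary~\ref{8} to see the system stabilizes for $k\gg0$. The only thing to tidy up is exactly the sign bookkeeping you flag: with the paper's convention $M(b)_d=M_{d+b}$ the $k$-th term is $S_A\cdot\del^{-kA}\cong S_A(-k\eps_A)$ and the shift identity reads $K_\bullet(N(\alpha),\beta)=K_\bullet(N,\beta-\alpha)(\alpha)$, but this does not affect the argument.
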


\section{Direct images through border tori}\label{33}

For any $A\in\ZZ^{d\times n}$ and $\beta\in\CC^d$ there is a natural localization map
\[
K_\bullet(S_A,\beta)\to
K_\bullet(S_A[\del_A^{-1}],\beta)\simeq\phi_+(\calM_\beta),
\]
which for most $\beta$ is an isomorphism according to Corollary~\ref{5}. 
One wonders what the cone of this map is when it is not an isomorphism. 
Its Fourier transform must be supported in $\Bar O\smallsetminus O$ since both complexes in question
are supported in $\Bar O$ and agree on $O$.

\begin{exa}\label{23}
Consider the case $A=\begin{pmatrix}1&1&0\\0&1&1\end{pmatrix}$ with $\beta=(1,-1)$. 
According to our results above, $\beta$ is in the set of strongly resonant parameters sketched below and $\phi_+(\calM(\beta))\simeq M_A(\gamma)$ for $\gamma\in\beta+(\NN+1)\bolda_2$. 
A calculation with {\sl Macaulay2} \cite{M2} shows 
that the cone $C'_\bullet$ 
over the localization map $M_A(\beta)\to M_A(\beta+\bolda_2)$ has homology $H^0(C'_\bullet)\cong H^1(C'_\bullet)\cong D_A/\ideal{x_1\del_1-2,\del_2,\del_3}$. 

\begin{figure}[ht]
\begin{center}
\caption{$A$ and $\SRes(A)$ (dotted) in Example~\ref{23}}\label{26}
\setlength{\unitlength}{0.5mm}
\begin{picture}(80,80)(-40,-40)
\put(-40,0){\vector(1,0){80}}
\put(0,-40){\vector(0,1){80}}
\put(10,0){\circle*{2}}
\put(0,10){\circle*{2}}
\put(10,10){\circle*{2}}
\put(10,-10){\circle{2}\makebox(0,0)[tl]{$\beta$}}
\put(20,0){\circle{2}\makebox(0,0)[bl]{$\gamma$}}
\multiput(-10,0)(-10,0){3}{\dottedline{2}(0,-35)(0,35)}
\multiput(0,-10)(0,-10){3}{\dottedline{2}(-35,0)(35,0)}
\end{picture}
\end{center}
\end{figure}
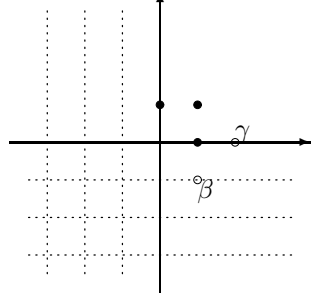

Consider the projection $\Spec(\CC[\ZZ^2])\to\Spec(\CC[\ZZ\bolda_1])$; corresponding direct images are computed by the left Koszul complex induced by $t_2\del_{t_2}$. 
Combine this projection with the embedding $\Spec(\CC[\ZZ\bolda_1])\to \CC^3$ sending $t_1$ to $(t_1,0,0)$ induced by the face $\QQ_+\bolda_1$ of $\QQ_+A$. 
The direct image of $\calM(\beta)$ under the composition has the same cohomology as $C'_\bullet$. Note that $\ZZ\bolda_1$ also ``causes'' $\beta$ to be in $\SRes(A)$.
\end{exa}

Example~\ref{23} suggests the investigation of direct images of $\calM(\beta)$ under orbit maps $T\to O$
factoring through tori in $\Bar O\smallsetminus O$, as well as partial $\ZZ^d$-graded localizations of Euler--Koszul homology of $S_A$. 
In the final two sections we follow this line of thought: we determine the structure of these direct
images in Theorem~\ref{32}, and we give a result that mirrors Corollary~\ref{36} in Corollary~\ref{12}.

\medskip

By abuse of language, we call a submatrix $F$ of columns of $A$ a \emph{face} of $A$ if $\QQ_+F$ is a face of the cone $\QQ_+A$.
The toric variety $V_A=\Spec(S_A)=\Var(I_A)\subseteq X^*=X^*_A$ is the closure of the orbit $O=O_A$ through $\boldone=\boldone_A$. 
The complement $\Var(I_A)\smallsetminus O_A$ is a union of other orbits, 
\[
\Var(I_A)=\bigsqcup_F O_F,
\]
the union being taken over the faces $F$ of $A$.
Here, $O_F$ is the orbit of $\boldone_F\in X^*_A$ where $(\boldone_F)_i=1$ for $i\in F$ and $(\boldone_F)_i=0$ for $i\in\Bar F:=A\smallsetminus F$. 
As is well-known, the closure of $O_F$ corresponds to the $\ZZ^d$-graded prime $I_A^F=R_A\ideal{I_F,\del_{\Bar F}}$ of $R_A$. 
Note that $\del_j\in I^F_A$ for no $j\in\Bar F$.

Denote by $\phi_F\colon T_A\to X^*_A$ the $T_A$-equivariant map induced by sending $1\in T_A$ to $\boldone_F\in T^*_A$. 
Denote $X_F=\CC^{|F|}\subseteq X_F$, $X^*_F=T^*_0X_F$ and $Y^*_F=X^*_F\smallsetminus
\Var(\prod_F\del_j)$. 
Then $\phi_F$ has a natural factorization as follows. 
\begin{equation}\label{21}
\xymat{\phi_F\colon
T_A\ar@{->>}[r]^-{\pi_F}&
T'_F\ar@{->>}[r]^-{\gamma_F}&
T_F\ar@{^(->}[r]^-{\iota_F}&
Y^*_F\ar@{^(->}[r]^-{\varpi_F}
&X^*_F\ar@{^(->}[r]&X^*_A}.
\end{equation}
Indeed, let
\[
Q'_F:=\QQ F\cap \ZZ^d\supseteq \ZZ F
\]
be the saturation of $\ZZ F$ in the ambient lattice. 
Abstractly, $\ZZ F$ and $Q'_F$ are isomorphic, both being free Abelian groups of rank $\dim F$.
Concretely, the inclusion $Q'_F\supseteq\ZZ F$ is of finite index and so there is a basis for $Q'_F$ together with a diagonal matrix $K=\diag(k_1,\ldots,k_{\dim F})$ such that, in this basis, $\ZZ F$ is generated by the image of $K$.

Then $\phi_F$ is the map of semigroup ring spectra associated to the composition of semigroup morphisms
\[
\xymat{
\ZZ A&\ar@{_(->}[l]
Q'_F&\ar@{_(->}[l]
\ZZ F&\ar@{->>}[l]
\ZZ^{|F|}&\ar@{_(->}[l]
\NN^{|F|}&\ar@{->>}[l]
\NN^{|A|}.}
\]
In particular, in (\ref{21}),
\begin{itemize}
\item the rightmost map is induced by $\del_j\mapsto 0$ for $j\in\Bar F$;
\item $\varpi_F$ and $\iota_F$ are defined like their pendants $\varpi=\varpi_A$ and $\iota=\iota_A$, by decorating each letter in (\ref{20}) and in (\ref{19}) with a subscript ``$F$'';
\item $\gamma_F$ is the finite covering map induced by $K$;
\item $\pi_F$ is the natural projection of tori.
\end{itemize}

Generalizing Section \ref{17} we shall determine now the direct image $(\phi_F)_+\calM(\beta)$ of $\calM(\beta)$.
As in the special case $F=A$, this direct image agrees essentially with (multiple copies of) a localized Euler--Koszul complex.
In the computation of $(\phi_F)_+\calM(\beta)$ we consider successively the factors in (\ref{21}) from left to right.

Since $Q'_F$ is saturated in $\ZZ^d$, $\ZZ^d\cong Q'_F\times Q''_F$. 
Let $T'_F$ and $T''_F$ respectively be the tori corresponding to the free Abelian groups $Q'_F$ and $Q''_F$.
Then 
\[
\pi_F=\id_{T'_F}\times(T''_F\stackrel{\pi''_F}{\longrightarrow}{\rm point}).
\]
Let $t'=t'_1,\ldots,t'_{\dim F}$ and $t''=t''_{\dim F+1},\ldots,t''_{d}$ be coordinates on $T'_F$ and $T''_F$
respectively, where $t'$ are the coordinates corresponding to the
basis in $Q'_F$ for which $\gamma_F$ is induced by $K$.
Let $\beta'$ and $\beta''$ be the projections of $\beta$ onto $\CC Q'_F$ and $\CC Q''_F$ respectively and put $\calM'_F(\beta)=\calD_{T'_F}/\calD_{T'_F}\ideal{\del_{t'}t'+\beta'}$, $\calM''_F(\beta)=\calD_{T''_F}/\calD_{T''_F}\ideal{\del_{t''}t''+\beta''}$.
Then $\calM(\beta)=\calM'_F(\beta)\otimes_\CC\calM''_F(\beta)$ and hence
\begin{equation}\label{28}
(\pi_F)_+\calM(\beta)=\calM'_F(\beta)\otimes_\CC(\pi''_F)_+\calM''_F(\beta).
\end{equation}
As $t''$ is component-wise nonzero and since the fibers of $\pi''_F$ are affine,
$(\pi''_F)_+\calM''_F(\beta)$ can be computed by the Koszul
complex of left-multiplication by $\del_{t''}t''$, followed by
$(\pi''_F)_*$ \cite[IV.5.3.3]{Bor87}.
While $(\pi''_F)_*$ simply computes global sections, an elementary calculation shows that left-multiplication by $\del_{t_0}t_0$ on $D_{\CC^*}/D_{\CC^*}\ideal{\del_{t_0}t_0+\beta_0}$ is an isomorphism for $\beta_0\notin\ZZ$ and quasi-isomorphic to the complex $\CC\to\CC$ with zero differential otherwise.
We conclude 

\begin{prp}\label{22}
$(\pi_F)_+\calM(\beta)$ is nonzero only if $\beta''\in Q''_F$, and in the nonzero case represented by the complex $\calM'_F(\beta)\otimes_\ZZ\bigwedge^\bullet\ZZ^{d-\dim F}$ with zero differential.\qed
\end{prp}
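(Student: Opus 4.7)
The plan is to finish the computation that was begun in the paragraph immediately preceding the statement. By (28), we already have
$(\pi_F)_+\calM(\beta)=\calM'_F(\beta)\otimes_\CC(\pi''_F)_+\calM''_F(\beta)$,
so only $(\pi''_F)_+\calM''_F(\beta)$ remains to be identified, where $\pi''_F\colon T''_F\to\mathrm{pt}$ is the projection of a $(d-\dim F)$-torus to a point.

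Using the Koszul description recalled in the text, this direct image is represented by the Koszul complex on $\calM''_F(\beta)$ of left multiplication by the $d-\dim F$ commuting operators $\del_{t''_i}t''_i$, followed by global sections. Since $\calM''_F(\beta)$ splits as the outer tensor product $\calM_{\beta''_{\dim F+1}}\boxtimes\cdots\boxtimes\calM_{\beta''_d}$ of rank-one twisted $\calD_{\CC^*}$-modules indexed by the one-variable components of $\beta''$, this multivariate Koszul complex factors over $\CC$ as the tensor product of the $d-\dim F$ one-variable Koszul complexes $[\calM_{\beta''_i}\xrightarrow{\,\cdot\,\del_{t''_i}t''_i\,}\calM_{\beta''_i}]$.

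I would then apply the single-variable assertion cited just before the statement: the $i$-th factor is acyclic when $\beta''_i\notin\ZZ$, and quasi-isomorphic to $[\CC\xrightarrow{0}\CC]$ otherwise. Because everything in sight is a complex of $\CC$-vector spaces, the tensor product of these quasi-isomorphisms is again a quasi-isomorphism. Hence the total complex vanishes unless every $\beta''_i\in\ZZ$, i.e.\ $\beta''\in\ZZ^{d-\dim F}=Q''_F$, and in the surviving case it is quasi-isomorphic to $\bigwedge^\bullet\CC^{d-\dim F}$ with zero differential. Plugging this back into (28) produces the claimed representative $\calM'_F(\beta)\otimes_\ZZ\bigwedge^\bullet\ZZ^{d-\dim F}$.

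No real obstacle is expected: the proposition is essentially a Künneth-style unwinding of $\pi_F=\id_{T'_F}\times\pi''_F$ combined with the one-variable $\del_t t$ calculation. The only care required is in correctly identifying the twisting parameter of $\calM''_F(\beta)$ with the $Q''_F$-component $\beta''$ of $\beta$ under the splitting $\ZZ^d\cong Q'_F\times Q''_F$, and in keeping track of the cohomological degrees contributed by each Koszul factor so that the external exterior algebra $\bigwedge^\bullet$ appears in its natural positions.
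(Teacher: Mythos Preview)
Your proposal is correct and follows essentially the same route as the paper: the proposition is stated with a \qed, its proof being the preceding paragraph, which reduces via (28) to $(\pi''_F)_+\calM''_F(\beta)$, computes this by the Koszul complex of the $\del_{t''_i}t''_i$ followed by global sections, and invokes the one-variable calculation. You have merely made explicit the K\"unneth-type tensor factorization of the multivariable Koszul complex that the paper compresses into ``We conclude''.
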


We proceed to study the direct image of $\calM'_F(\beta)$ under
$\gamma_F$. 
In our chosen coordinate systems on $T'_F$ and $T_F$, it corresponds to the map of
semigroup rings $\CC[\ZZ F]\to\CC[Q'_F]$ given by
\[
\del_j\mapsto (\del'_j)^{k_j},\qquad 1\le i\le \dim F.
\]

In a single variable $\del_0$, the endomorphism $\ZZ\to\ZZ$ given by
$1\mapsto k$ induces 
$\del_0\mapsto (\del'_0)^k$ and so $k\del_0 t_0=\del_0't'_0$. 
The transfer module for the map $\zeta\colon T'_0\to T_0$ between the corresponding tori is $D_{T'_0\to T_0}=\zeta^* D_{T_0}=D_{T'_0}$ and thus $D_{T_0\from T'_0}=D_{T'_0}$ is free of rank one as right $D_{T'_0}$-module. 
As $T'_0$ and $T_0$ are affine, the direct image of $\calM'_0=D_{T'_0}/\ideal{\del'_0t'_0+\beta_0}$ is $\calM'_0$ itself, considered as $D_{T_0}$-module \cite[VI.5.4]{Bor87}. 
As $t_0$ is invertible and $\del'_0t'_0+\beta_0=k\del_0t_0+\beta_0$, 
\begin{eqnarray*}
\zeta_+\calM'_0&=&D_{T'_0}/\ideal{\del'_0t'_0+\beta_0}\\
&=&\bigoplus_{i=0}^{k-1}D_{T_0}(\del'_0)^i/\ideal{\del'_0t'_0+\beta_0}\cap D_{T_0}(\del'_0)^i\\
&\cong&\bigoplus_{i=0}^{k-1}D_{T_0}/\ideal{k\del_0t_0+\beta_0-i}
\end{eqnarray*}
since
${\del'_0}^i(\del'_0t'_0+\beta_0)=(\del'_0t'_0+\beta_0-i){\del'_0}^i$. 

It follows that in the coordinate system where $\ZZ F$ is generated by the image of $K$ we have 
\[
\gamma_+\calM'_F(\beta)=
\bigotimes_{i=1}^{\dim F}\left(\bigoplus_{r_i=0}^{k_i-1} \calM_{F_i}((\beta_i-r_i)/k_i)\right),
\]
$F_i$ being the $i$-th row of $F$ in the chosen basis.
On the level of locally constant solution sheaves, this corresponds to taking $K$-th roots.

The next two maps, $\iota_F$ and $\varpi_F$, present no difficulties
since all necessary work has already been done in
Section~\ref{17}. It remains to note that the embedding $X^*_F\into
X^*_A$ sends any $\calD_{X^*_F}$-module $\calM$ to
$\calM\otimes_\CC\CC[\bar F]$ in order to arrive at

\begin{thm}\label{32}
Let $F$ be a face of $A$, let $Q'_F=\QQ F\cap \ZZ^d\supseteq \ZZ
F$ be the saturation of its lattice in the ambient lattice, and let
$\ZZ^d=Q'_F\times Q''_F$ be a splitting.  
In the induced splitting $\CC^d=(\CC\otimes_\ZZ
Q'_F)\times (\CC\otimes_\ZZ Q''_F)$, write
$(\beta',\beta'')=\beta$ and $(E'_F,E''_F)=E$.

Choose coordinates in $Q'_F$ such that $\ZZ F\subseteq Q'_F$ is
generated by the image of the diagonal matrix
$K=\diag(k_1,\ldots,k_{\dim F})$.

The direct image $(\phi_F)_+\calM(\beta)$ is nonzero precisely when $\beta''$ is in $Q''_F$ (i.e., if $\beta''$ is the derivative of a torus character $T''_F\to\CC^*$). In the nonzero case, it is represented by 
\[
\CC[x_{\Bar F}]\otimes_\CC\bigoplus_\alpha 
K_\bullet(S_F[\del_F^{-1}],E'_F-\alpha)
\otimes_\ZZ\bigwedge^\bullet\ZZ^{d-\dim F}.
\]
Here, $\alpha$ runs through the $[Q'_F:\ZZ F]$ vectors for which
$\alpha_i=(\beta_i-r_i)/k_i$ and $r_i=0,\ldots,k_i-1$.\qed
\end{thm}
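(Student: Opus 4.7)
The plan is to follow the factorization \eqref{21} and compose the direct images of $\pi_F$, $\gamma_F$, $\iota_F$, $\varpi_F$ and the final closed embedding $X^*_F\hookrightarrow X^*_A$ in succession. All of the individual computations have already been carried out above; what remains is to concatenate them and to verify that the parameter shifts combine to yield the stated representative.

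For the projection $\pi_F$, Proposition~\ref{22} already delivers both the dichotomy and the exterior-algebra factor: the direct image vanishes unless $\beta''\in Q''_F$, in which case it is $\calM'_F(\beta)\otimes_\ZZ\bigwedge^\bullet\ZZ^{d-\dim F}$ with zero differential. For $\gamma_F$, the multivariable version of the single-variable calculation preceding the theorem (based on the commutation ${\del'_0}^i(\del'_0 t'_0+\beta_0)=(\del'_0 t'_0+\beta_0-i){\del'_0}^i$) yields
\[
(\gamma_F)_+\calM'_F(\beta)\cong\bigoplus_\alpha\calM_F(\alpha),
\]
where $\alpha$ runs over the $[Q'_F:\ZZ F]$ vectors with $\alpha_i=(\beta_i-r_i)/k_i$ and $r_i=0,\dots,k_i-1$. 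Because the differential coming out of Proposition~\ref{22} vanishes, the factor $\bigwedge^\bullet\ZZ^{d-\dim F}$ passes through this step without interaction.

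For $\iota_F$ and $\varpi_F$, I would simply repeat the argument in the proof of Proposition~\ref{1} with $A$ replaced by $F$ and $\beta$ by each $\alpha$: the Fourier-transformed direct image $(\varpi_F)_+(\iota_F)_+\calM_F(\alpha)$ is represented by the right Koszul complex of $E'_F-\alpha$ on $S_F[x_F,\del_F^{-1}]$, which in the formalism of Section~\ref{29} is exactly $K_\bullet(S_F[\del_F^{-1}],E'_F-\alpha)$. The final closed embedding $X^*_F\hookrightarrow X^*_A$, being the Fourier transform of projection onto the coordinates indexed by $F$, simply tensors over $\CC$ with the polynomial ring $\CC[x_{\Bar F}]$ in the remaining variables. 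Assembling these four stages produces the stated representative.

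The main obstacle is bookkeeping. On the one hand, one must verify that the $\bigwedge^\bullet\ZZ^{d-\dim F}$ factor produced at the first stage really does pass intact through the remaining direct image functors, which is immediate since those functors act nontrivially only in the $Q'_F$-direction. On the other hand, and more delicately, one has to match the Euler operator $E'_F$ that appears in the localized Euler--Koszul complex on $S_F$ with the Fourier transform of $-\del_{t'}t'$ pushed forward along $\gamma_F\circ\pi_F$; once this identification is made, the parameter shift $\beta_i\mapsto(\beta_i-r_i)/k_i$ produced by $\gamma_F$ is inherited as the subtraction $E'_F-\alpha$ in the final complex.
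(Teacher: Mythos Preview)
Your proposal is correct and follows essentially the same route as the paper: the proof of Theorem~\ref{32} in the text is precisely the concatenation of the direct-image computations along the factorization~\eqref{21}, namely Proposition~\ref{22} for $\pi_F$, the single-variable covering calculation for $\gamma_F$, Proposition~\ref{1} (with $F$ in place of $A$) for $\iota_F$ and $\varpi_F$, and the observation that the closed embedding $X^*_F\hookrightarrow X^*_A$ tensors with $\CC[x_{\Bar F}]$. Your additional bookkeeping remarks about the exterior-algebra factor and the identification of $E'_F$ are apt but do not go beyond what the paper implicitly uses.
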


\section{Euler--Koszul homology and direct limits}\label{39}

Euler--Koszul homology has mostly been studied on the category of \emph{toric modules}, a class of finite $\ZZ^d$-graded modules determined by $S_A$. 
In this section we generalize ideas expanded in \cite{Oku06a}.

\begin{dfn}\label{31}
A \emph{weakly toric filtration} on a (possibly infinite) $\ZZ^d$-graded $R_A$-module $M$ is an exhaustive increasing filtration $\{M_\sss\}_{\sss\in\NN}$ by $\ZZ^d$-graded modules such that 
\begin{enumerate}
\item\label{31a} $M_{\sss+1}/M_\sss\cong S_{F_\sss}(\boldb_\sss)$ for some face $F_\sss$ of $A$ and some $\boldb_\sss\in\ZZ^d$ and
\item\label{31b} for all $\beta\in\ZZ^d$ the set $\{\sss\in\NN\mid-\beta\in\qdeg(M_{\sss+1}/M_\sss)\}$ is finite.
\end{enumerate}
A weakly toric filtration is \emph{toric} if it is finite; in that case the second condition is redundant.

If $M$ permits (weakly) toric filtrations then we call it \emph{(weakly) toric}.
Note that the first condition for a weakly toric filtration can be replaced by $M_{\sss+1}/M_\sss$ being toric.
The categories of toric and weakly toric $R_A$-modules are full subcategories of the category of $\ZZ^d$-graded $R_A$-modules with degree-preserving morphisms.
\end{dfn}

We remark that in condition (\ref{31b}) of Definition \ref{31} the union of $\{\sss\in\NN\mid-\beta\in\qdeg(M_{\sss+1}/M_\sss)\}$ over all $\beta\in\ZZ^d$ can be infinite.

\begin{prp}\label{34}
Any finitely generated $\ZZ^d$-graded $S_A$-module is toric.
A general $\ZZ^d$-graded $S_A$-module is weakly toric if the function $\boldb\mapsto\dim_\CC(M_\boldb)$ is bounded.
\end{prp}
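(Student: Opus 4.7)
For the first statement, I would invoke the prime filtration theorem for finitely generated modules over a Noetherian $\ZZ^d$-graded ring. Since $S_A\cong\CC[\NN A]$ is a finitely generated $\CC$-algebra, it is Noetherian, so any finitely generated $\ZZ^d$-graded $S_A$-module $M$ admits a finite chain of $\ZZ^d$-graded submodules whose successive quotients have the form $(S_A/\mathfrak{p})(\boldb)$ for $\ZZ^d$-graded primes $\mathfrak{p}\subseteq S_A$ and shifts $\boldb\in\ZZ^d$. The $\ZZ^d$-graded primes of $S_A$ are precisely the face primes $\mathfrak{p}_F=\ideal{\del_j\mid j\notin F}$, and $S_A/\mathfrak{p}_F\cong S_F$, so this filtration is automatically toric.

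For the second statement, the plan is first to exhibit an exhausting chain of finitely generated submodules. Enumerate $\ZZ^d=\{\gamma_s\}_{s\ge 1}$ and set $M^{(s)}:=\sum_{i\le s}S_A\cdot M_{\gamma_i}$; the bounded-dimension hypothesis forces each $M^{(s)}$ to be finitely generated, and $M=\bigcup_s M^{(s)}$. Applying the first statement to each quotient $M^{(s+1)}/M^{(s)}$ and concatenating the lifts of the resulting toric filtrations produces an exhaustive $\ZZ^d$-graded filtration $\{M_\sss\}_{\sss\in\NN}$ with $M_{\sss+1}/M_\sss\cong S_{F_\sss}(\boldb_\sss)$; this settles condition~\eqref{31a}.

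The main obstacle is condition~\eqref{31b}: for fixed $\beta\in\ZZ^d$ one must show that $\{\sss\mid-\beta\in\qdeg(S_{F_\sss}(\boldb_\sss))\}=\{\sss\mid\boldb_\sss\in\beta+\CC F_\sss\}$ is finite, where I use $\qdeg(S_F(\boldb))=-\boldb+\CC F$. Since $A$ has finitely many faces, it suffices to bound for each face $F$ the set $B_F:=\{\sss\mid F_\sss=F,\ \boldb_\sss\in\beta+\CC F\}$. Each piece $S_F(\boldb_\sss)$ contributes $1$ to $\dim_\CC M_{-\boldb_\sss}$, so any individual shift is realized by at most $C$ indices; if $B_F$ were infinite, the set of distinct shifts would also be infinite and would lie in $\beta+(\ZZ^d\cap\CC F)$, a finite union of cosets of $\beta+\ZZ F$, so by pigeonhole some coset $\boldb^*+\ZZ F$ would contain infinitely many shifts. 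Since $\eps_F=\sum_{j\in F}\bolda_j$ lies in the relative interior of $\QQ_+F$, for any finite set $B''$ of $|B''|>C$ such shifts one can choose $N\gg 0$ so that $N\eps_F+(\boldb^*-\boldb)\in\NN F$ for every $\boldb\in B''$. Setting $\alpha:=-\boldb^*-N\eps_F$, each such $\boldb$ satisfies $\alpha+\boldb\in-\NN F$, so $(S_F(\boldb))_\alpha\ne 0$ and hence $\dim_\CC M_\alpha\ge|B''|>C$, a contradiction.

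The technical core is the absorption property $N\eps_F+v\in\NN F$ for $v\in\ZZ F$ and $N\gg 0$, which holds because $\eps_F$ is in the relative interior of the cone $\QQ_+F$ so that $N\eps_F$ eventually dominates any fixed $v$. The pigeonhole reduction to a single coset of $\ZZ F$ inside $\ZZ^d\cap\CC F$ is essential: without it, $\boldb^*-\boldb$ might fail to lie in $\ZZ F$, in which case $N\eps_F+(\boldb^*-\boldb)\notin\ZZ F\supseteq\NN F$ for any $N$.
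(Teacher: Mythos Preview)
Your proof is correct and follows essentially the same strategy as the paper's: exhaust $M$ by finitely generated graded submodules, refine via prime filtration to face-ring quotients, then pigeonhole on faces and on cosets of $\ZZ F$ in $\QQ F\cap\ZZ^d$ and use absorption in $\NN F$ to locate a single degree witnessing unbounded dimension. The only cosmetic differences are that the paper cites \cite[Ex.~4.7]{MMW05} for the first statement (which is exactly the prime-filtration argument you spell out) and filters by the $\ell^1$-balls $\sum_j|b_j|\le s$ rather than by an arbitrary enumeration of $\ZZ^d$.
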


\begin{proof}
The finite case follows from \cite[Ex.~4.7]{MMW05}. 

In the general case, we claim that the submodules $M_\sss$ of $M$ generated by all $\ZZ^d$-homogeneous elements of degree $\boldb$ with $\sum_{j=1}^d|b_j|\le \sss$ form a weakly toric filtration on $M$. 
First note that the filtration is exhaustive with toric quotients by Definition~\ref{31}.\eqref{31b} and the first sentence of the proof.
Next refine the filtration so that all filtration quotients are shifted face rings $M_{\sss+1}/M_\sss=S_{F_\sss}(\boldb_\sss)$ for suitable faces $F_\sss$ of $A$ and $\boldb_\sss\in\ZZ^d$.

Now suppose that
$\{\sss\in\NN\mid-\beta\in\qdeg(M_{\sss+1}/M_{\sss})\}$ is infinite
for some $\beta\in\ZZ^d$.  As the set of faces of $A$ is finite,
there exists a face $F$ with $-\beta\in\qdeg(S_F(\boldb_\sss))=\CC
F-\boldb_\sss$ for infinitely many $\sss$.  The difference of two such
$\boldb_s$ is then in $\CC F$ and hence in $\QQ F\cap\ZZ^d$.  The index
$[\QQ F\cap\ZZ^d:\ZZ F]$ is finite, hence there is an infinite
subsequence of the $\boldb_\sss$ with differences in the same coset of
$(\QQ F\cap\ZZ^d)/\ZZ F$.

For any choice of such $\boldb_{\sss_1},\ldots,\boldb_{\sss_k}$ the intersection $\bigcap_{j=1}^k(\NN F-\boldb_{\sss_j})$ is nonempty and for any element $\boldc$ of this intersection, $\boldc\in\NN F-\boldb_{\sss_j}=\deg(M_{\sss_j+1}/M_{\sss_j})$ for $j=1,\dots,k$.
As the function $N\mapsto\dim_\CC(N_\boldc)$ is additive in $N$,
$\dim_\CC(M_\boldc)>k$ and so $\boldb\mapsto
\dim_\CC(M_\boldb)$ cannot be bounded, in contradiction to the
hypothesis. 
\end{proof}

Part of our study of the Euler--Koszul functor applies in a quite general context: 
Let $(\frakS,\le)$ be a partially ordered set and pick a direct system $\frakM$ over $\frakS$ in the category of $\ZZ^d$-graded $R_A$-modules with degree-preserving morphisms,
\[
\frakM=\left(\{M_\sss\mid \sss\in\frakS\},\{\phi_{\sss,\sss'}\colon M_\sss\to M_{\sss'}\mid \sss\le\sss'\}\right).
\]
Then there is a $\ZZ^d$-graded direct limit
\begin{align}\label{43}
\phi_\sss\colon M_\sss\to\dlim_{\sss\in\frakS}M_\sss=:M.
\end{align}
We wish to discuss the Euler--Koszul complex $K_\bullet(M,\beta)$ on the potentially infinitely generated $R_A$-module $M$.
To begin with, note that the endomorphisms $E_i-\beta_i$ defined in (\ref{9}) induce endomorphisms of the direct system $D_A\otimes_{R_A}\frakM$, which allows to define $\dlim_{\sss\in\frakS}K_\bullet(M_\sss,\beta)$ and $\dlim_{\sss\in\frakS}H_\bullet(M_\sss,\beta)$. 
The natural maps $M_\sss\to M$ give rise to maps
\begin{align}
\dlim_{\sss\in\frakS}K_\bullet(M_\sss,\beta)&\to K_\bullet(M,\beta)\label{15}.
\end{align}
These induce
\begin{align}
\dlim_{\sss\in\frakS}H_\bullet(M_\sss,\beta)&\to H_\bullet(M,\beta)\label{16}
\end{align}
which are in general neither injective not surjective, cf.~Theorem~\ref{35}.\eqref{35a}.

Recall that $\frakS$ is called \emph{filtered} if for each $\sss',\sss''\in\frakS$ there exists $\sss\in\frakS$ with $\sss'\le\sss$ and $\sss''\le\sss$.
The following generalizes Definition \ref{41}.

\begin{dfn}\label{40}
The \emph{quasi-degrees} of a weakly toric $M$ are defined as
\[
\qdeg(M)=\bigcup_{\sss\in\NN}\qdeg(M_{\sss+1}/M_\sss).
\]
for any weakly toric filtration $\frakM=\{M_\sss\}_{\sss\in\NN}$. 

More generally, let $\frakS$ be filtered and pick a direct system $\frakM=\{M_\sss\}_{\sss\in\frakS}$ in the category of toric $R_A$-modules. 
With notation as in \eqref{43}, we define the \emph{quasi-degrees} of $M=\dlim\frakM$ as
\[
\qdeg(M)=\bigcup_{\sss\in\frakS}\qdeg(\phi_\sss(M_s)).
\]
Quasi-degrees are well-defined since toric modules are finitely generated. 
\end{dfn}
The following is the weakly toric version of \cite[Prop.~5.3]{MMW05}. 
\begin{thm}\label{35}
Let $\frakM$ and $M$ be as in Definition~\ref{40}.
\begin{asparaenum}
\item\label{35a} The map \eqref{15} is an isomorphism. If $\frakS$ is filtered then \eqref{16} is an isomorphism as well.
\item\label{35b} If $\frakS$ is filtered and if $\frakM$ is a direct system in the category of toric $R_A$-modules then $H_\bullet(M,\beta)=0$ if $-\beta\not\in\qdeg(M)$.
\item\label{35c} 
For any weakly toric module $M$, $H_\bullet(M,\beta)=0$ if and only if
$-\beta\not\in\qdeg(M)$.
\end{asparaenum}
\end{thm}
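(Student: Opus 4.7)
The plan is to handle the three claims in order, letting each feed into the next.

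\textbf{Part (\ref{35a}).} The Euler--Koszul complex $K_\bullet(N,\beta)$ is the Koszul complex of the pairwise commuting endomorphisms $E-\beta$ on $D_A\otimes_{R_A}N$: finitely many copies of $D_A\otimes_{R_A}N$ placed in homological degrees $0,\ldots,d$, with differential functorial in $N$. Since $D_A\otimes_{R_A}(-)$ is a left adjoint and since finite direct sums and degree-preserving differentials commute with all colimits, the natural map~\eqref{15} is an isomorphism of complexes. When $\frakS$ is filtered, filtered colimits of modules are exact and thus commute with homology, giving the isomorphism~\eqref{16}.

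\textbf{Part (\ref{35b}).} Using filteredness, rewrite $M=\dlim_\sss\phi_\sss(M_\sss)$ as the filtered colimit of its image submodules with structure maps the inherited inclusions. Each $\phi_\sss(M_\sss)$ is a $\ZZ^d$-graded quotient of a toric module, hence a finitely generated $\ZZ^d$-graded $S_A$-module, hence toric by Proposition~\ref{34}. The hypothesis $-\beta\notin\qdeg(M)=\bigcup_\sss\qdeg(\phi_\sss(M_\sss))$ says $-\beta\notin\qdeg(\phi_\sss(M_\sss))$ for every $\sss$, so the toric case of \cite[Prop.~5.3]{MMW05} yields $H_\bullet(\phi_\sss(M_\sss),\beta)=0$; now apply~(\ref{35a}).

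\textbf{Part (\ref{35c}).} For a weakly toric filtration $\{M_\sss\}_{\sss\in\NN}$ each $M_\sss$ is toric (via the finite truncation $0=M_{-1}\subseteq M_0\subseteq\cdots\subseteq M_\sss$), so $M$ is a filtered colimit of toric modules. An induction on the length of the filtration using the short exact sequences $0\to M_\sss\to M_{\sss+1}\to M_{\sss+1}/M_\sss\to 0$ yields $\qdeg(M_\sss)=\bigcup_{i<\sss}\qdeg(M_{i+1}/M_i)$, so the two definitions of $\qdeg(M)$ from Definitions~\ref{31} and~\ref{40} agree, and ``$\Leftarrow$'' is immediate from~(\ref{35b}). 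For the converse, assume $-\beta\in\qdeg(M)$. By condition~(\ref{31b}) the nonempty set $\{\sss:-\beta\in\qdeg(M_{\sss+1}/M_\sss)\}$ is finite; let $\sss_0$ be its maximum. For $\sss>\sss_0$, \cite[Prop.~5.3]{MMW05} gives $H_\bullet(M_{\sss+1}/M_\sss,\beta)=0$, and the long exact sequence of Euler--Koszul homology identifies $H_\bullet(M_\sss,\beta)$ with $H_\bullet(M_{\sss+1},\beta)$; passing to the colimit via~(\ref{35a}) yields $H_\bullet(M,\beta)\cong H_\bullet(M_{\sss_0+1},\beta)$. Since $\qdeg(M_{\sss_0+1}/M_{\sss_0})\subseteq\qdeg(M_{\sss_0+1})$, we have $-\beta\in\qdeg(M_{\sss_0+1})$, and the reverse direction of~\cite[Prop.~5.3]{MMW05} in the toric case forces $H_\bullet(M_{\sss_0+1},\beta)\neq 0$.

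The main obstacle is the $\Rightarrow$ direction of~(\ref{35c}): one must locate a finite stage of the filtration whose Euler--Koszul homology already computes the full colimit, so that the ``iff'' in the toric case of~\cite[Prop.~5.3]{MMW05} can be applied in both directions. Condition~(\ref{31b}) of Definition~\ref{31} is precisely what makes this stabilization possible.
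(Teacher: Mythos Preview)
Your argument is correct and follows essentially the same route as the paper's proof: commute tensor and filtered colimits for part~(\ref{35a}), pass to images and invoke \cite[Prop.~5.3]{MMW05} for part~(\ref{35b}), and for part~(\ref{35c}) use condition~(\ref{31b}) to find a stabilization index $\sss_0$ so that $H_\bullet(M,\beta)\cong H_\bullet(M_{\sss_0+1},\beta)$, then appeal to the toric case. One small slip in your part~(\ref{35b}): a quotient of a toric $R_A$-module need not be an $S_A$-module (e.g.\ $R_A/I_A^2$ is toric but $I_A$ does not annihilate it), so Proposition~\ref{34} does not apply directly; the correct justification is to push the toric filtration of $M_\sss$ forward to $\phi_\sss(M_\sss)$ and refine, noting that each graded quotient of a shifted face ring is again toric. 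The paper glosses over this same point.
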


\begin{proof}\
\begin{asparaenum}

\item Direct limits commute with left-adjoint functors,
and hence with tensor products, in the sense that there is a natural
isomorphism
\[
\dlim_{\sss\in\frakS}K_i(M_\sss,\beta)=
\dlim_{\sss\in\frakS}\bigwedge^iD_A^d\otimes_{R_A}M_\sss\stackrel{\cong}{\longrightarrow}\bigwedge^iD_A^d\otimes_{R_A}M=
K_i(M,\beta)
\]
composing to the morphism in (\ref{15}).
If $\frakS$ is filtered then $\dlim_{\sss\in\frakS}$ is an exact functor and so (\ref{16}) is an isomorphism.

\item We may replace $M_\sss$ by its image $\phi_\sss(M_\sss)\subseteq M$ and assume that all $\phi_\sss$ are inclusions.
Then $-\beta\not\in\qdeg(M)\supseteq\qdeg(M_\sss)$ implies exactness of $K_\bullet(M_\sss,\beta)$ for all $\sss\in\frakS$ by \cite[Prop.~5.3]{MMW05} and hence exactness of $K_\bullet(M,\beta)$ by the filter condition.

\item Let $M=\bigcup_{\sss\in\NN}M_\sss$ be weakly toric and fix $\beta\in\CC^d$.
We continue to assume that $M_\sss=\phi_\sss(M_\sss)\subseteq M$ as in the previous part. 
By \cite[Prop.~5.3]{MMW05}, $-\beta\notin\qdeg(M_{\sss+1}/M_\sss)$ means that $0\to M_\sss\to M_{\sss+1}\to M_{\sss+1}/M_\sss\to 0$ induces a quasi-isomorphism $K_\bullet(M_\sss,\beta)\to K_\bullet(M_{\sss+1},\beta)$.
As $M_\sss$ is weakly toric, there is $\sss_\beta$ such that $-\beta\notin\qdeg(M_{\sss+1}/M_{\sss})$ for any $\sss\geq\sss_\beta$. 
Then $K_\bullet(M_{\sss_\beta},\beta)\cong K_\bullet(M,\beta)$ while $-\beta\notin\qdeg(M_{\sss_\beta})$ if and only if $-\beta\notin\qdeg(M)$. 
It thus suffices to show that $K_\bullet(M_{\sss_\beta},\beta)$ is exact if and only if $-\beta\notin\qdeg(M_{\sss_\beta})$, but that is \cite[Prop.~5.3]{MMW05}.

\end{asparaenum}
\end{proof}

\begin{rmk}\label{37}\
\begin{asparaenum}

\item\label{37a} The acyclicity statement in Proposition~\ref{1} can also be derived using Theorem~\ref{35}.\eqref{35c}.
The latter shows that (\ref{16}) is an isomorphism for
\[
\CC[\ZZ A]=S_A[\del_A^{-1}]=\dlim_{s\in\NN}\CC[\QQ_+A\cap\ZZ A]\cdot\del^{-s A}.
\]
As $\CC[\QQ_+A\cap\ZZ A]$ is normal and hence Cohen--Macaulay \cite{Hoc72}, $K_\bullet(\CC[\QQ_+A\cap\ZZ A],\beta)$ and hence $K_\bullet(\CC[\ZZ A],\beta)$ is a resolution for all $\beta$ \cite[Thm.~6.6]{MMW05}.

\item\label{37b} Localizations of weakly toric modules at subsets $\del_\tau$ of $\del_A$ are weakly toric since they arise as the union $M[\del_\tau^{-1}]=\dlim_{i\in\NN}\del_\tau^{-i}(M/\Gamma_{\del_\tau}(M))$.
It follows that the same holds for local cohomology modules $H^i_I(M)$ where $M$ is weakly toric and where $I$ is a monomial ideal of $R_A$. 
As a special case, $\qdeg(H^i_\frakm(M))=\deg(H^i_\frakm(M))$ as every finitely generated submodule of $ H^i_\frakm(M)$ is of finite length.

\item\label{37c} Left-multiplication by $\del_j$ gives a quasi-isomorphism on $K_\bullet(M,\beta)$ if and only if both $H_\bullet(\Gamma_{\del_j}(M),\beta)$ and $H_\bullet(H^1_{\del_j}(M),\beta)$ are zero. 
With $M=S_A$, the former is trivially zero, and
\[
\qdeg(H^1_{\del_j}(S_A))=\bigcup_{i=1}^\infty\left(\qdeg(S_A/\ideal{\del_j})-i\cdot\deg(\del_j)\right)
\]
by the Koszul interpretation of local cohomology. 
This yields an alternative proof for Theorem~\ref{6}.

\item\label{37d} For any weakly toric module $M$ and each $\beta\in\CC^d$, $H_\bullet(M,\beta)$ is a holonomic $D_A$-module. 
To see this, pick a weakly toric filtration $\{M_\sss\}_{\sss\in\NN}$ for $M$ and let $\sss_\beta$ such that $-\beta\in\qdeg(M_\sss)$ happens only for $\sss\le \sss_\beta$. 
Then, as in the proof of Theorem~\ref{35}.\eqref{35c}, $K_\bullet(M_{\sss_\beta},\beta)\to K_\bullet(M,\beta)$ is a quasi-isomorphism. 
Thus the claim follows from \cite[Prop.~5.1]{MMW05}.

\item\label{37e} The following refers to the notion of holonomic families, \cite[\S2]{MMW05}.

For weakly toric $M=\bigcup_{\sss\in\NN}M_\sss$, if $\bsb=b_1,\ldots,b_d$ are indeterminates, then the $D_A[\bsb]$-module
\[
\calM=(D_A[\bsb]\otimes_{R_A}M)/\ideal{E-\bsb}
\]
restricts to a holonomic $D_A$-module on each fiber of $X\times B\to B=\Spec(\CC[\bsb])$ while its rank module $\calM\otimes_{\CC[x_A]}\CC(x_A)$ is $\CC[\bsb](x_A)$-coherent if and only if $M$ is finitely generated over $R_A$.

Observe that $\bigcup_{\sss\in\NN}\qdeg(M_{\sss+1}/M_{\sss})$ is
locally in the analytic topology a finite union of subspaces of
$\CC^d$, namely a union of $\CC F+\ZZ^d$ over faces $F$ of $A$.
Thus, if $\sss_\beta$ is such that
$-\beta\not\in\qdeg(M_{\sss+1}/M_\sss)$ for all $\sss>\sss_\beta$ then
$K_\bullet(M,\gamma)\simeq K_\bullet(M_{\sss_\beta},\gamma)$ is induced
by the toric module $M_{s_\beta}$ for all $\gamma$ analytically near
$\beta$.

In particular, in the analytic topology, $\calM$ is locally on $B$ a holonomic family. 
One might hence call $\calM$ a \emph{weakly holonomic family}.
\end{asparaenum}
\end{rmk}

After these preparations we are ready to generalize Corollary \ref{36}.

\begin{cor}\label{12}
Let $\tau\subseteq A$ and fix $\beta\in\CC^d$. 
For $k\gg 0$, 
\[
R_A[\del_\tau^{-1}]\otimes_{R_A}H_\bullet(S_A,\beta)=H_\bullet(S_A,\beta+k\eps_\tau)(k\eps_\tau).
\]
More precisely, the above equality holds if $-(\beta+k\eps_\tau)\notin\qdeg(S_A/\ideal{\del^\tau})+\NN\eps_\tau$.
\end{cor}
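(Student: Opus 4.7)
The plan is to express $S_A[\del_\tau^{-1}]$ as a filtered direct limit of toric modules and apply Theorem~\ref{35}, paralleling the approach behind Corollary~\ref{36}. Combining the isomorphism \eqref{11} with the exactness of localization identifies the left-hand side with $H_\bullet(S_A[\del_\tau^{-1}],\beta)$. Since $S_A$ is a domain, multiplication by $\del^\tau$ is injective on $S_A$, and with appropriate grading shifts (chosen so that each $\del^\tau$ is a degree-preserving inclusion) one obtains a direct system
\[
S_A\xrightarrow{\del^\tau}S_A(\eps_\tau)\xrightarrow{\del^\tau}S_A(2\eps_\tau)\xrightarrow{\del^\tau}\cdots
\]
of toric modules (toric by Proposition~\ref{34}) whose colimit is $S_A[\del_\tau^{-1}]$. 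Theorem~\ref{35}.\eqref{35a} commutes Euler--Koszul homology past this filtered direct limit, and the shift identity $H_\bullet(M(\alpha),\beta)\cong H_\bullet(M,\beta+\alpha)(\alpha)$ read off from the definition \eqref{9} of the Euler action then yields
\[
H_\bullet(S_A[\del_\tau^{-1}],\beta)\cong\dlim_k H_\bullet(S_A,\beta+k\eps_\tau)(k\eps_\tau).
\]

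The remaining task is to show that this system stabilizes under the stated hypothesis. For each $k$, the injectivity of $\del^\tau$ gives a short exact sequence of toric modules
\[
0\to S_A(k\eps_\tau)\xrightarrow{\del^\tau}S_A((k+1)\eps_\tau)\to(S_A/\ideal{\del^\tau})((k+1)\eps_\tau)\to 0,
\]
and its induced long exact sequence in Euler--Koszul homology shows that the $k$-th transition is a quasi-isomorphism precisely when $H_\bullet\bigl((S_A/\ideal{\del^\tau})((k+1)\eps_\tau),\beta\bigr)=0$. By Theorem~\ref{35}.\eqref{35c} (equivalently \cite[Prop.~5.3]{MMW05}), this vanishing is equivalent to $-\beta\notin\qdeg(S_A/\ideal{\del^\tau})+(k+1)\eps_\tau$. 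The stated hypothesis $-(\beta+k\eps_\tau)\notin\qdeg(S_A/\ideal{\del^\tau})+\NN\eps_\tau$ asserts this condition simultaneously at every index $\geq k$, so the $k$-th term already equals the stable limit. The assertion for $k\gg0$ then follows because $\qdeg(S_A/\ideal{\del^\tau})$ is a finite union of affine subspaces of $\CC^d$, so the condition is eventually met (outside of degenerate situations where $\eps_\tau$ is parallel to such a subspace that has already been hit by $-\beta-\NN\eps_\tau$).

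The main technical care is in the bookkeeping of the grading shifts: aligning the $\ZZ^d$-degree of $\del^\tau$, the shift $k\eps_\tau$ on the module, the parameter shift $\beta\mapsto\beta+k\eps_\tau$, and the shifted quasi-degrees of the cokernel, so that the identifications above emerge consistently. Beyond that, the argument is a direct-limit computation entirely parallel to the proof of Corollary~\ref{36}, with $\tau$ in place of $A$, but leveraging the full strength of Theorem~\ref{35} in place of the simpler finite-filtration version needed for Corollary~\ref{36}.
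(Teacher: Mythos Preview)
Your argument is essentially the paper's: realize $S_A[\del_\tau^{-1}]$ as the filtered colimit of shifted copies of $S_A$ (the paper writes these as $S_A\cdot\del^{-k\tau}$, you as $S_A(k\eps_\tau)$), use \eqref{11} and Theorem~\ref{35}.\eqref{35a} to pass Euler--Koszul homology through the limit, and control stabilization by the long exact sequence with third term $H_\bullet(S_A/\ideal{\del^\tau},\beta+k\eps_\tau)$.

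There is one genuine gap at the end. Your parenthetical hedge ``(outside of degenerate situations where $\eps_\tau$ is parallel to such a subspace\ldots)'' leaves the unconditional ``for $k\gg0$'' claim of the corollary unproved: you have only shown stabilization under an extra nondegeneracy assumption on $\beta$. In fact no such degeneracy can occur, and you should say why. The affine components of $\qdeg(S_A/\ideal{\del^\tau})$ are translates of $\CC F$ for faces $F$ of $A$ that occur in a toric filtration of $S_A/\ideal{\del^\tau}$; since $\del^\tau$ annihilates this module, each such $F$ omits some $j\in\tau$. Choosing a linear functional $\ell$ vanishing on $\CC F$ and strictly positive on $\QQ_+A\smallsetminus\QQ_+F$ gives $\ell(\eps_\tau)=\sum_{j\in\tau}\ell(\bolda_j)>0$, so $\eps_\tau\notin\CC F$. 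Hence $-\beta-k\eps_\tau$ eventually leaves every component, which is exactly the paper's assertion that ``$-\qdeg(S_A/\ideal{\del^\tau})$ is bounded in $\eps_\tau$-direction''. With this observation your proof is complete and matches the paper's.
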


\begin{proof}
We consider $S_A[\del_\tau^{-1}]=\dlim_k(S_A\cdot\del^{-k\tau})$ as direct limit of the $\ZZ^d$-graded modules $S_A\cdot\del^{-k\tau}$.
By (\ref{11}) and Theorem~\ref{35}.\eqref{35a}, 
\[
R_A[\del_\tau^{-1}]\otimes_{R_A}H_\bullet(S_A,\beta)
\cong H_\bullet(S_A[\del_\tau^{-1}],\beta)
=\dlim_kH_\bullet(S_A\cdot\del^{-k\tau },\beta)
\]
The natural maps
\begin{equation}\label{13}
H_\bullet(S_A,\beta+(k-1)\eps_\tau)\cong H_\bullet(S_A\cdot\del^{(-k+1)\tau},\beta)\to H_\bullet(S_A\cdot\del^{-k\tau },\beta)\cong H_\bullet(S_A,\beta+k\eps_\tau)
\end{equation}
are induced by right multiplication by $\del^\tau$ (or, alternatively, by $S_A(-(k-1)\eps_\tau)\to S_A(-k\eps_\tau)$). 
The maps \eqref{13} are enclosed in the long exact Euler--Koszul homology sequence by homology of $K_\bullet((S_A/\ideal{\del_\tau})\cdot\del^{-k\tau },\beta)\cong K_\bullet(S_A/\ideal{\del_\tau},\beta+k\eps_\tau)$.
By \cite[Prop.~5.3]{MMW05}, the latter is zero for $k\gg0$ since $-\qdeg(S_A/\ideal{\del_\tau})$ is bounded in $\eps_\tau$-direction.
Thus, 
\[
\dlim_kH_\bullet(S_A\cdot\del^{-k\tau },\beta)=H_\bullet(S_A\cdot\del^{-k\tau },\beta)=H_\bullet(S_A,\beta+k\eps_\tau)(k\eps_\tau)
\]
for $k\gg0$ as claimed.
\end{proof}

\section*{Acknowledgements}

We would like to thank Christine Berkesch for correcting an error in Remark~\ref{24}.


\bibliographystyle{amsalpha}
\bibliography{ekdi}

\newcommand{\etalchar}[1]{$^{#1}$}
\def\cprime{$'$}
\providecommand{\bysame}{\leavevmode\hbox to3em{\hrulefill}\thinspace}
\providecommand{\MR}{\relax\ifhmode\unskip\space\fi MR }
\providecommand{\MRhref}[2]{%
  \href{http://www.ams.org/mathscinet-getitem?mr=#1}{#2}
}
\providecommand{\href}[2]{#2}
\begin{thebibliography}{BGK{\etalchar{+}}87}

\bibitem[Ado94]{Ado94}
Alan Adolphson, \emph{Hypergeometric functions and rings generated by
  monomials}, Duke Math. J. \textbf{73} (1994), no.~2, 269--290. \MR{96c:33020}

\bibitem[BGK{\etalchar{+}}87]{Bor87}
A.~Borel, P.-P. Grivel, B.~Kaup, A.~Haefliger, B.~Malgrange, and F.~Ehlers,
  \emph{Algebraic {$D$}-modules}, Perspectives in Mathematics, vol.~2, Academic
  Press Inc., Boston, MA, 1987. \MR{MR882000 (89g:32014)}

\bibitem[GGZ87]{GGZ87}
I.~M. Gel{\cprime}fand, M.~I. Graev, and A.~V. Zelevinski{\u\i},
  \emph{Holonomic systems of equations and series of hypergeometric type},
  Dokl. Akad. Nauk SSSR \textbf{295} (1987), no.~1, 14--19. \MR{MR902936
  (88j:58118)}

\bibitem[GKZ90]{GKZ90}
I.~M. Gel{\cprime}fand, M.~M. Kapranov, and A.~V. Zelevinsky, \emph{Generalized
  {E}uler integrals and {$A$}-hypergeometric functions}, Adv. Math. \textbf{84}
  (1990), no.~2, 255--271. \MR{MR1080980 (92e:33015)}

\bibitem[GZK89]{GKZ89}
I.~M. Gel{\cprime}fand, A.~V. Zelevinski\u{\i}, and M.~M. Kapranov,
  \emph{Hypergeometric functions and toric varieties}, Funktsional. Anal. i
  Prilozhen. \textbf{23} (1989), no.~2, 12--26. \MR{90m:22025}

\bibitem[Hoc72]{Hoc72}
M.~Hochster, \emph{Rings of invariants of tori, {C}ohen-{M}acaulay rings
  generated by monomials, and polytopes}, Ann. of Math. (2) \textbf{96} (1972),
  318--337. \MR{MR0304376 (46 \#3511)}

\bibitem[M2]{M2}
\emph{{\rm Daniel R. Grayson and Michael E. Stillman},
  {\tt\MakeUppercase{m}acaulay~2}, a software system for research in algebraic
  geometry}, Available at {\tt http://www.math.uiuc.edu/Macaulay2/}.

\bibitem[MMW05]{MMW05}
Laura~Felicia Matusevich, Ezra Miller, and Uli Walther, \emph{Homological
  methods for hypergeometric families}, J. Amer. Math. Soc. \textbf{18} (2005),
  no.~4, 919--941 (electronic). \MR{MR2163866}

\bibitem[Oku06]{Oku06a}
Go~Okuyama, \emph{Local cohomology modules of {$A$}-hypergeometric systems of
  {C}ohen-{M}acaulay type}, Tohoku Math. J. (2) \textbf{58} (2006), no.~2,
  259--275. \MR{MR2248433}

\bibitem[Pha79]{Pha79}
Fr{\'e}d{\'e}ric Pham, \emph{Singularit\'es des syst\`emes diff\'erentiels de
  {G}auss-{M}anin}, Progress in Mathematics, vol.~2, Birkh\"auser Boston,
  Mass., 1979, With contributions by Lo Kam Chan, Philippe Maisonobe and
  Jean-\'Etienne Rombaldi. \MR{MR553954 (81h:32015)}

\end{thebibliography}

\end{document}